\documentclass[reqno]{amsart}%
\usepackage{amstext}
\usepackage{amsfonts}
\usepackage{amsmath}
\usepackage{amssymb}
\usepackage{hyperref}
\usepackage{graphicx}%
\setcounter{MaxMatrixCols}{30}
\numberwithin{equation}{section}
\newtheorem{theorem}{Theorem}[section]

\newtheorem{proposition}[theorem]{Proposition}

\newtheorem{lemma}[theorem]{Lemma}

\begin{document}
\title[3D Navier Stokes Equations.]{Hausdorff measure of Vorticity Nodal Sets for the 3D Hyperviscous Navier
Stokes Equations with General forces}
\author{Abdelhafid Younsi}
\address{Department of Mathematics and Computer Science, University of Djelfa, Algeria.}
\email{younsihafid@gmail.com}
\subjclass[2000]{ 35K55, 35Q30, 76D05, 76F20, 76F70}
\keywords{Navier-Stokes equations,\thinspace Hyperviscosity, Vorticity, Turbulence}

\begin{abstract}
In this paper, we modified the three dimensional Navier-Stokes equations by
adding a $l$-Laplacian. We provide upper bounds on the two-dimensional
Hausdorff measure $\mathcal{H}_{l}^{2}$ of $N_{\omega}^{0}=\{x\in\Omega\subset%
\mathbb{R}
^{3}/$ $\omega(x,t)=0\}$ the level sets of the vorticity $\omega$\ of
solutions. We express them in terms of the Kolmogorov length-scale and the
Landau--Lifschitz estimates of the number of degrees of freedom in turbulent
flow. We also, under certain hypothesis recover the two-dimensional Hausdorff
measure estimates for the usual 3D Navier--Stokes equations with potential
force. Moreover, we show that the estimates depend on $l$, this result
suggests that the modified Navier Stokes system is successful model of
turbulence and the size of the nodal set $\mathcal{H}_{l}^{2}(N_{\omega}^{0})$
leads the way for developing the turbulence theory.

\end{abstract}
\maketitle

\section{Introduction}

In this paper, we provide upper bounds on the two-dimensional Hausdorff
measure $\mathcal{H}_{l}^{2}$ of $N_{\omega}^{0}$ the level sets associated
with the vorticity of modified three dimensional Navier-Stokes equations. We
modified the 3D Navier-Stokes system by adding a higher-order viscosity term
to the conventional system%
\begin{equation}%
\begin{array}
[c]{c}%
\dfrac{du}{dt}+\varepsilon\left(  -\triangle\right)  ^{l}u-\nu\triangle
u+\left(  u.\nabla\right)  u+\nabla p=f\left(  x\right)  ,\text{ in }%
\Omega\times\left(  0,\infty\right) \\
\text{div}u=0,\text{ in }\Omega\times\left(  0,\infty\right)  \text{,}\\
p(x+Le_{i},t)=p(x,t),\text{ }u(x+Le_{i},t)=u(x,t)\text{\ }i=1,...,d\text{
\ }t\in\left(  0,\infty\right) \\
u\left(  x,0\right)  =u_{0}\left(  x\right)  ,\text{in }\Omega\text{,}%
\end{array}
\label{1}%
\end{equation}
on $\Omega=\left(  0,L\right)  ^{d}$ with periodic boundary conditions and
$\left(  e_{1},...,e_{d}\right)  $ is the natural basis of $%
\mathbb{R}
^{d}$. Here $\varepsilon>0$ is the artificial dissipation parameter and
$\nu>0$ is the kinematic viscosity of the fluid, $l>1$. The functions $u\ $is
the velocity vector field, $p$ is the pressure, and $f$ is a given force
field. For $\varepsilon=0$, the model is reduced to the Navier-Stokes system.

In the work \cite{23}, the strong convergence of the solution of this problem
to the solution of the conventional system as the regularization parameter
goes to zero, was established for each dimension $d\leq4$.

Mathematical model for such fluid motion has been used extensively in
turbulence simulations (see e.g. \cite{6}) also see Borue and Orsag \cite{1a,
1b}. For further discussion of theoretical results concerning (%
\ref{1}
), see \cite{0b, 0a,18a, 23}.

For the 3D Navier--Stokes system weak solutions of problem are known to exist
by a basic result by J. Leray from 1934 \cite{17b}, only the uniqueness of
weak solutions remains as an open problem. Then the known theory of global
attractors of infinite dimensional dynamical systems is not applicable to the
3D Navier--Stokes system.

In particular, in case one accepts the point of view that the dimension of a
global attractor for the Navier--Stokes equations is associated with the
number of degrees of freedom in turbulent flows, then the two-dimensional
Hausdorff measure $\mathcal{H}_{l}^{2}(N_{\omega}^{0})$ is an important way to
the understanding of turbulence theory \cite{23}.

We are interested in the three dimensional case. Let $P_{m}$ be the projection
onto the first $m$ eigenspaces of the Stokes operator $A=-\triangle$ and let
$N_{\omega}^{0}=\{x\in\Omega\subset%
\mathbb{R}
^{3}/$ $\omega(x,t)=0\}$ the nodal sets of the vorticity $\omega$ for
solutions of the equation (%
\ref{1}
). We provide an upper bound on the size of the nodal sets $\mathcal{H}%
_{l}^{2}(N_{\omega}^{0})$ and we show that, the bounds necessarily depend on
$m$ and $l\ $this dependence is a fractional power of $l$. Thus answering a
question raised by J. Avrin \cite{0b}. We also obtain here scale-invariant
estimates on the two-dimensional Hausdorff measure $\mathcal{H}_{l}%
^{2}(N_{\omega}^{0})$ in terms of the Landau--Lifschitz theory of the number
of degrees of freedom in turbulent flow. Since expressing the above estimates
in terms of the (dimensionless) Grashoff number $G$. In order to obtain an
upper bound on the Hausdorff measure of level sets associated, we use the
method from \cite{16} (see also \cite{4}, \cite{5}).

The main purpose of the present article is to study the dependence of the
two-dimensional Hausdorff measure $\mathcal{H}_{l}^{2}(N_{\omega}^{0})$ on the
parameter $l$. Using a family of Kolmogorov flows as base flows we can deduce
also upper bounds on the Hausdorff measure $\mathcal{H}_{l}^{2}(N_{\omega}%
^{0})$. We also find here that the upper bounds on the two-dimensional
Hausdorff measure of $N_{\omega}^{0}$ converges to the corresponding upper
bounds on $\mathcal{H}_{1}^{2}(N_{\omega}^{0})$ the two-dimensional Hausdorff
measure of the nodal sets of the usual 3D Navier-Stokes as $l=1$. Under
certain hypothesis we recover the two-dimensional Hausdorff measure
$\mathcal{H}_{1}^{2}(N_{\omega}^{0})$ estimates for the usual 3D
Navier--Stokes equations with potential force. We extend the method from
\cite{15a} to a 3D Navier-Stokes with general forcing modified by
$l$-Laplacian. These estimates are obtained without using the Dirichlet
quotients \cite{15a}.

We note, however, that for the incompressible 3D Navier-Stokes equations with
general force, it seems not so easy to get some better estimates on the
Hausdorff measure of the level sets associated with the vorticity as in the
case of potential force studied in \cite{15a,16a} for periodic solutions of
the 2D. Related results for the 3D Navier--Stokes equations (with general
forcing) can be found in \cite{1}. The upper bounds on the Hausdorff measures
of the level sets associated with solutions of some other partial differential
equations were obtained in \cite{5},\cite{12}, \cite{13}, \cite{14},
\cite{16}, \cite{17}, and \cite{18}.

The paper is organized as follows. In Section 2, we present the relevant
mathematical framework for the paper. In Section 3, we provide upper bounds
for the two-dimensional Hausdorff measure $\mathcal{H}_{l}^{2}$ of the level
sets associated with the vorticity of the Navier-Stokes system with hyperdissipation.

\section{Notations and preliminaries}

In this section we introduce notations and the definitions of standard
functional spaces that will be used throughout the paper. We denote by
$H^{m}\left(  \Omega\right)  $, the Sobolev space of $L_{\text{per}}$ periodic
functions. These spaces are endowed with the inner product%
\[
\left(  u,v\right)  =%
{\textstyle\sum\limits_{\left\vert \beta\right\vert \leq m}}
(D^{\beta}u,D^{\beta}v)_{L^{2}\left(  \Omega\right)  }\text{ and the norm
}\left\Vert u\right\Vert _{m}=%
{\textstyle\sum\limits_{\left\vert \beta\right\vert \leq m}}
(\left\Vert D^{\beta}u\right\Vert _{L^{2}\left(  \Omega\right)  }^{2}%
)^{\frac{1}{2}}.
\]
Each $u\in L_{per}$ can be identified with its Fourier expansion%
\[
u\left(  x\right)  =%
{\displaystyle\sum\limits_{k\in\mathbb{Z}^{3}}}
u_{k}\exp(2i\pi k.\frac{x}{L})
\]
where $u_{k}$ $\in$ $%
\mathbb{C}
^{3}$ satisfy $\overline{u}_{k}=u_{-k}$. Then $u$ is in $L^{2}$ if and only
if
\[
\left\Vert u\right\Vert _{L^{2}}^{2}=\left\vert \Omega\right\vert
{\displaystyle\sum\limits_{k\in\mathbb{Z}^{3}}}
\left\vert u_{k}\right\vert ^{2}<\infty,\text{ \ \ \ }\left\vert
\Omega\right\vert =L^{3},
\]
then the Sobolev space $u\in H^{m}\left(  \Omega\right)  $, $m\in%
\mathbb{R}
^{+}$ can be characterized by%
\[
H^{m}\left(  \Omega\right)  =\{u,\overline{u}_{k}=u_{-k},%
{\displaystyle\sum\limits_{k\in\mathbb{Z}^{3}}}
k^{2m}\left\vert u_{k}\right\vert ^{2}<\infty.\}
\]
$H^{-m}\left(  \Omega\right)  $ denote the dual space of $H^{m}\left(
\Omega\right)  $.

We denote by $\dot{H}^{m}\left(  \Omega\right)  $ the subspace of
$H^{m}\left(  \Omega\right)  $ with, zero average%

\[
\dot{H}^{m}\left(  \Omega\right)  =\{u\in H^{m}\left(  \Omega\right)  ;\text{
}%
{\displaystyle\int\limits_{\Omega}}
u\left(  x\right)  dx=0\}.
\]
For $m=0$, we have $\dot{H}^{m}\left(  \Omega\right)  =\dot{L}^{2}\left(
\Omega\right)  $.

\begin{itemize}
\item We introduce the following solenoidal subspaces $V_{s},$ $s\in%
\mathbb{R}
^{+}$ which are important to our analysis%
\begin{align*}
V_{0}\left(  \Omega\right)   &  =\{u\in\dot{L}^{2}\left(  \Omega\right)
,\text{div}u=0,u.n\mid_{\Sigma_{i}}=-u.n\mid_{\Sigma_{i+3}},i=1,2,3\};\\
V_{1}\left(  \Omega\right)   &  =\{u\in\dot{H}^{1}\left(  \Omega\right)
,\text{div}u=0,\gamma_{0}u\mid_{\Sigma_{i}}=\gamma_{0}u\mid_{\Sigma_{i+3}%
},i=1,2,3\}.\\
V_{2}\left(  \Omega\right)   &  =\{u\in\dot{H}^{2}\left(  \Omega\right)
,\text{div}u=0,\gamma_{0}u\mid_{\Sigma_{i}}=\gamma_{0}u\mid_{\Sigma_{i+3}%
},\gamma_{1}u\mid_{\Sigma_{i}}=-\gamma_{1}u\mid_{\Sigma_{i+3}},i=1,2,3\},
\end{align*}

\end{itemize}

see \cite[Chapter III, Section 2]{21}. We refer the reader to R.Temam
\cite{22} for details on these spaces. Here the faces of $\Omega$ are numbered
as%
\[
\Sigma_{i}=\partial\Omega\cap\left\{  x_{i}=0\right\}  \text{ and }%
\Sigma_{i+3}=\partial\Omega\cap\left\{  x_{i}=L\right\}  ,\text{ }i=1,2,3.
\]
Here $\gamma_{0}$, $\gamma_{1}$ are the trace operators and $n$ is the unit
outward normal on $\partial\Omega$.

\begin{itemize}
\item The space $V_{0}$ is endowed with the inner product $\left(  u,v\right)
_{L^{2}\left(  \Omega\right)  }$ and norm $\left\Vert u\right\Vert
=\nolinebreak\left(  u,u\right)  _{L^{2}\left(  \Omega\right)  }^{1\prime2}$.

\item $V_{1}$ is the Hilbert space with the norm $\left\Vert u\right\Vert
_{1}=\left\Vert u\right\Vert _{V_{1}}$. The norm induced by $\dot{H}%
^{1}\left(  \Omega\right)  $\ and the norm $\left\Vert \nabla u\right\Vert
$\ are equivalent in $V_{1}$.

\item $V_{2}$ is the Hilbert space with the norm $\left\Vert u\right\Vert
_{2}=\left\Vert u\right\Vert _{V_{2}}$. In $V_{2}$ the norm induced by
$\dot{H}^{2}\left(  \Omega\right)  $ is equivalent to the norm $\left\Vert
\triangle u\right\Vert $.
\end{itemize}

$V_{s}^{\prime}$ denote the dual space of $V_{s}$.

Let $P$ be the orthogonal projection in $L_{\text{per}}^{2}\left(
\mathbb{R}
^{3}\right)  ^{3}$ with the range $H$.\newline Let $A=-P\triangle$ the Stokes
operator. It is easy to check that $Au=-\triangle u$ for every $u\in D\left(
A\right)  $. We recall that the operator $A$ is a closed positive self-adjoint
unbounded operator, with $D\left(  A\right)  =\left\{  u\in V_{0}\text{,
}Au\in V_{0}\right\}  $. We have in fact,
\[
D\left(  A\right)  =\dot{H}^{2}\left(  \Omega\right)  \cap V_{0}=V_{2}\text{.}%
\]
The eigenvalues of $A$ are $\left\{  \lambda_{j}\right\}  _{j=1}^{j=\infty}$,
$0$ $<$ $\lambda_{1}\leq\lambda_{2}\leq...$and the corresponding orthonormal
set of eigenfunctions $\left\{  w_{j}\right\}  _{j=1}^{j=\infty}$ is complete
in $V_{0}$%
\[
Aw_{j}=\lambda_{j}w_{j},\ \ \ w_{j}\in D(A),\forall j.
\]
The spectral theory of $A$ allows us to define the powers $A^{l}$ of $A$ for
$l\geq1$, $A^{l}$ is an unbounded self-adjoint operator in $V_{0}$ with a
domain $D(A^{l})$ dense in $V_{2}\subset V_{0}$. We set here%
\[
A^{l}u=\left(  -\triangle\right)  ^{l}u\text{\ for }u\in D\left(
A^{l}\right)  =V_{2l}\cap V_{0}\text{.}%
\]
The space $D\left(  A^{l}\right)  $ is endowed with the scalar product and the
norm%
\begin{equation}
\left(  u,v\right)  _{D(A^{l})}=\left(  A^{l}u,A^{l}v\right)  \text{,
}\left\Vert u\right\Vert _{D\left(  A^{l}\right)  }=\{\left(  u,v\right)
_{D\left(  A^{l}\right)  }\}^{\frac{1}{2}}. \label{a1}%
\end{equation}
In the case for $l>0,$ we have $D\left(  A^{l}\right)  =\{u\in H,%
{\displaystyle\sum\limits_{j=1}^{\infty}}
\lambda_{j}^{2l}(u,w_{j})^{2}<\infty\}$. For $l\in%
\mathbb{R}
$ the scalar product and the norm in (%
\ref{a1}
) can wiriten alterntivly as
\begin{equation}
\left(  u,v\right)  _{D(A^{l})}=%
{\displaystyle\sum\limits_{j=1}^{\infty}}
\lambda_{j}^{2l}(u,w_{j})(v,w_{j}),\left\Vert u\right\Vert _{D\left(
A^{l}\right)  }=\{%
{\displaystyle\sum\limits_{j=1}^{\infty}}
\lambda_{j}^{2l}(u,w_{j})\}^{\frac{1}{2}} \label{a5}%
\end{equation}
and for $u\in D(A^{l})$ we can write%
\[
A^{l}u=%
{\displaystyle\sum\limits_{j=1}^{\infty}}
\lambda_{j}^{l}(u,w_{j})w_{j}.
\]
Let us now define the trilinear form $b(.,.,.)$ associated with the inertia
terms%
\[
b\left(  u,v,w\right)  =\sum_{i,j=1}^{3}%
{\displaystyle\int\limits_{\Omega}}
u_{i}\frac{\partial v_{j}}{\partial x_{_{i}}}w_{j}dx.
\]
The continuity property of the trilinear form enables us to define (using
Riesz representation Theorem) a bilinear continuous operator $B\left(
u,v\right)  $; $V_{2}\times V_{2}\rightarrow V_{2}^{\prime}$ will be defined
by
\begin{equation}
\left\langle B\left(  u,v\right)  ,w\right\rangle =b\left(  u,v,w\right)
,\text{ }\forall w\in V_{2}\text{.} \label{2a}%
\end{equation}
Recall that for $u$ satisfying $\nabla.u=0$ we have%
\begin{equation}
b\left(  u,u,u\right)  =0\text{ and }b\left(  u,v,w\right)  =-b\left(
u,w,v\right)  \text{.} \label{2b}%
\end{equation}
We recall some well known inequalities that we will be using in what
follows.\newline Young's inequality%
\begin{equation}
ab\leq\frac{\sigma}{p}a^{p}+\frac{1}{q\sigma^{\frac{q}{p}}}b^{q}%
,a,b,\sigma>0,\text{ }p>1,\text{ }q=\frac{p}{p-1}. \label{7}%
\end{equation}
Poincar\'{e}'s inequality%
\begin{equation}
\lambda_{1}\left\Vert u\right\Vert ^{2}\leq\Vert A^{\frac{1}{2}}u\Vert
^{2}\text{\ for all }u\in V_{0}\text{.} \label{8}%
\end{equation}

Denoting%
\[
\Vert u\Vert_{G\left(  t\right)  }^{2}=\parallel e^{tA^{\frac{1}{2}}%
}u\parallel\text{\ and }\left(  u,v\right)  _{G\left(  t\right)
}=(e^{tA^{\frac{1}{2}}}u,e^{tA^{\frac{1}{2}}}v).
\]
The set $D(e^{\alpha A})$ is called the Gevrey class of operator of order
$\alpha\geq0$ \cite{11}. Our use of Gevrey classes shall be based on the
following consideration.

Denote with $N_{h}^{0}=\{x\in\Omega:h(x)=0\}$ the zero (nodal) set of a
function $h$ in a set $\Omega$, and let $\mathcal{H}^{2}$ be the
two-dimensional Hausdorff measure operating on subsets of $%
\mathbb{R}
^{3}$ (area in this case).

\section{Level Sets of the Vorticity Function}

Using the operators defined above, we can write the modified system (%
\ref{1}
) in the evolution form%

\begin{equation}%
\begin{array}
[c]{c}%
\dfrac{du}{dt}+\varepsilon A^{l}u+\nu Au+B\left(  u,u\right)  =f\left(
x\right)  \text{, in }\Omega\times\left(  0,\infty\right) \\
u_{0}\left(  x\right)  =u_{0},\text{ \ \ in }\Omega\text{.}%
\end{array}
\label{b1}%
\end{equation}
The existence and uniqueness results for initial value problem (%
\ref{1}
) can be found in \cite{18a}. The following theorem collects the main result
in this work

\begin{theorem}
\label{Theorem1} For $l\geq\frac{d+2}{4}$, $d$ is the space dimension, for
$\varepsilon>0$ fixed, $f\in\nolinebreak L^{2}\left(  0,T;V_{0}^{\prime
}\right)  $ and $u_{0}\in V_{0}$ be given. There exists a unique weak solution
of (%
\ref{1}
) which satisfies $u\in L^{2}\left(  0,T;V_{l}\right)  \cap L^{\infty}\left(
0,T;V_{0}\right)  ,\forall T>0.$
\end{theorem}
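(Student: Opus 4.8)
The plan is to prove global existence and uniqueness of a weak solution to the hyperviscous Navier–Stokes system via a standard Galerkin approximation scheme combined with energy estimates. The key point, compared to the classical 3D Navier–Stokes situation, is that the hyperviscous term $\varepsilon A^l u$ with $l \geq \frac{d+2}{4}$ provides enough regularization to control the nonlinearity $B(u,u)$ and thereby close the a priori estimates and pass to the limit, which in turn also yields uniqueness.

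Let me think about how to set this up carefully.

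First I would set up the Galerkin approximation. Using the orthonormal eigenbasis $\{w_j\}$ of the Stokes operator $A$, I project the evolution equation onto the span of the first $m$ eigenfunctions, seeking $u_m(t) = \sum_{j=1}^m g_j(t) w_j$ solving the finite-dimensional ODE system $\frac{du_m}{dt} + \varepsilon A^l u_m + \nu A u_m + P_m B(u_m,u_m) = P_m f$, with $u_m(0) = P_m u_0$. Local existence follows from the Cauchy–Peano or Picard theorem for ODEs.

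The second step is the central a priori energy estimate. Taking the inner product of the projected equation with $u_m$ and using the orthogonality relation $b(u_m,u_m,u_m)=0$ from (\ref{2b}), I obtain
\[
\frac{1}{2}\frac{d}{dt}\|u_m\|^2 + \varepsilon \|A^{l/2} u_m\|^2 + \nu \|A^{1/2} u_m\|^2 = \langle f, u_m\rangle.
\]
The nontrivial point is bounding the forcing term: since $f \in L^2(0,T;V_0')$, I use $\langle f, u_m\rangle \leq \|f\|_{V_0'}\|u_m\|_{V_1}$, and absorb the $V_1$-norm into the dissipation using Young's inequality (\ref{7}) together with Poincaré's inequality (\ref{8}). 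Integrating in time gives uniform bounds on $u_m$ in $L^\infty(0,T;V_0)$ and, crucially, in $L^2(0,T;V_l)$ (the $\varepsilon \|A^{l/2}u_m\|^2$ term). These bounds are independent of $m$, which both rules out finite-time blow-up of the ODE (so the local solution is global) and provides the compactness needed below.

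The third step is to pass to the limit $m\to\infty$. From the uniform bounds I extract a subsequence converging weakly-$*$ in $L^\infty(0,T;V_0)$ and weakly in $L^2(0,T;V_l)$. To handle the nonlinear term $B(u_m,u_m)$ I need strong convergence in an intermediate space, which I get from an Aubin–Lions compactness argument: I first bound $\frac{du_m}{dt}$ in $L^2(0,T;V_l')$ (the condition $l \geq \frac{d+2}{4}$ is exactly what makes $B(u,u)$ land in $V_l'$ with the available regularity, via a Sobolev/interpolation estimate on the trilinear form), then Aubin–Lions gives strong convergence of $u_m$ in $L^2(0,T;V_0)$. This suffices to identify the weak limit of $B(u_m,u_m)$ as $B(u,u)$, so the limit $u$ is a weak solution. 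I expect the verification that the hyperviscosity exponent $l \geq \frac{d+2}{4}$ makes the nonlinear term controllable — i.e. the precise Sobolev embedding and interpolation estimate bounding $b(u,u,v)$ by norms dominated by the $V_l$ dissipation — to be the main technical obstacle; this is where the threshold on $l$ is genuinely used.

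Finally, for uniqueness, I take two solutions $u, \bar u$ with the same data, set $w = u - \bar u$, and estimate $\frac{1}{2}\frac{d}{dt}\|w\|^2$. The difference of the nonlinear terms reduces, via the antisymmetry (\ref{2b}), to a term of the form $-b(w,u,w)$, which I bound using the same interpolation inequality as before, again absorbing factors into $\varepsilon\|A^{l/2}w\|^2$ by Young's inequality. A Grönwall argument then forces $w \equiv 0$, completing the proof.
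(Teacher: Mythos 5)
Your proposal is correct and follows essentially the same route as the paper's source for this statement: the paper itself gives no proof of Theorem~\ref{Theorem1}, deferring to Lions \cite{18a} (see also \cite{23}), where existence and uniqueness are obtained by precisely your scheme — Galerkin approximation on the Stokes eigenbasis, the energy estimate yielding uniform $L^{\infty}(0,T;V_{0})\cap L^{2}(0,T;V_{l})$ bounds, a bound on $du_{m}/dt$ in $L^{2}(0,T;V_{l}^{\prime})$ plus Aubin--Lions compactness to pass to the limit in $B(u_{m},u_{m})$, and a Gr\"onwall argument for uniqueness. Your reading of the threshold is also the right one: by Sobolev embedding and interpolation between $V_{0}$ and $V_{l}$, the condition $l\geq\frac{d+2}{4}$ is exactly what makes the trilinear term absorbable by the hyperviscous dissipation $\varepsilon\Vert A^{l/2}\cdot\Vert^{2}$, which is the classical Lions computation.
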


The modern understanding of turbulence is that it is a collection of weakly
correlated vortical motions, which, despite their intermittent and chaotic
distribution over a wide range of space and time scales, actually consist of
local characteristic 'eddy' patterns that persist as they move around under
the influences of their own and other eddies' vorticity fields \cite{13c}.

In fluid mechanics,the Reynolds number is important in analyzing any type of
flow when there is substantial velocity gradient (i.e. shear.) It indicates
the relative significance of the viscous effect compared to the inertia
effect. The Reynolds number is proportional to inertial force divided by
viscous force (see \cite{6} )%
\begin{equation}
Re=\frac{Ul}{\nu}\text{ \ \ \ \ \ \ \ }U^{2}=L^{-2}\left\langle \left\Vert
u\right\Vert _{2}^{2}\right\rangle \label{01}%
\end{equation}

where $l$ the characteristic scale of the forcing and $\left\langle
.\right\rangle $ is the long-time-average%
\begin{equation}
\left\langle g(.)\right\rangle =\lim_{T\rightarrow\infty}\sup(\frac{1}{T}%
{\textstyle\int_{0}^{T}}
g(t)dt). \label{02}%
\end{equation}

With Reynolds number calculator we can analyze what makes fluid flow regime
laminar and what is needed to force the fluid to flow in turbulent regime.
Experimental observations show that for 'fully developed' flow, laminar flow
occurs when $Re<$ $R_{_{l}}e$ and turbulent flow occurs when $Re>R_{_{t}}e$.
In the interval between $R_{_{l}}e$ and $R_{_{t}}e$, laminar and turbulent
flows are possible ('transition' flows) \cite{6} and references therein. The
nature of the vortex formed in the fluid flow depends strongly on the Reynolds
number(\cite{6}; and references therein). These transition Reynolds numbers
are also called critical Reynolds numbers, and were studied by Osborne
Reynolds around 1895 \cite{20b}. The transition to turbulence and the
constructon of vortex are delayed by increasing the critical Reynolds number.
If we assume that the critical Reynolds number $R_{_{c}}e$ for the onset of
vortex shedding is, atteint for
\begin{equation}
\left\Vert u\right\Vert =\frac{\nu R_{c}e}{l}L, \label{03}%
\end{equation}
then the associet velocity $u$ for each
\begin{equation}
Re\geq R_{c}e \label{04}%
\end{equation}
satisfaies the inequality
\begin{equation}
\left\Vert u\right\Vert \geq\frac{\nu R_{c}e}{l}L=\mu, \label{05}%
\end{equation}
$\mu$\ is a positive constant.

Another nondimensional quantity that we use often is the so-called Grashof
number, which is proportional to the forcing term $f$. Hence, we define the
Grashof numbers in the 3-dimensional case, as in Foias, Manley, Rosa and Temam
\cite{6} by%
\begin{equation}
Gr(f)=\frac{1}{\nu^{2}\lambda_{1}^{3/4}}\left\Vert f\right\Vert \label{06}%
\end{equation}
The effects of variation in Grashof number on vortex have been shown in the
work of Olson and Titi \cite{18c}, they keep the spatial structure of the
forcing function fixed and vary the Grashof number by varying the amplitude of
the forcing function. Namely, they vary the Grashof number by rescaling the
forcing function by a multiplicative factor. This is equivalent to changing
the viscosity or the size of the domain. As increases, or equivalently as the
viscosity decreases, the turbulent flow becomes more energetic and one would
expect the number of numerically determining modes to increase as well. There
are many reasons to suppose that the existene and intensty of vortex in our
work should increase as the grashof numbre increases \cite{13b,18c, 18b}. In
\cite{13b} zero forcing implies that the attractor has been reduced to zero.
Since all solutions decay eventually to zero in the unforced case.

This intuition is supported by existing theoretical critucal numbre
$G_{c}r(f)$ for the existence of level curves of representative vorticity fields.

Note the flow for $Gr(f)\geq G_{c}r(f)$ has noticeably more large scale
structure compared to the flow for $Gr(f)\leq G_{c}r(f)$. This is consistent
with the energy spectra, where most of the energy is in the lowest modes, that
is, in the large spatial scales and eddies when the Grashof number is large
\cite{18c}.

The effect of a body force on vorticity production and turbulence generation
in a fluid flow is described by the Grashof numbre.

In addition, we assume without loss of generality that $\left\Vert
f\right\Vert $ is bounded. Than, there exist a maximum Grashof numbre
$G_{\max}r(f)$ and a positive constant $\rho$ such that the body force $f$
satisfies the follwing inequality
\begin{equation}
\left\Vert f\right\Vert \leq\nu^{2}\lambda_{1}^{3/4}G_{\max}r(f)=\rho.
\label{07}%
\end{equation}
Since $\left\Vert f\right\Vert $ is srictement positive we get
\begin{equation}
\frac{\left\Vert u\right\Vert }{\left\Vert f\right\Vert }\geq\frac{\mu}{\rho
}=\frac{LR_{c}e}{\nu\lambda_{1}^{3/4}lG_{\max}r(f)}=\beta\label{08}%
\end{equation}
this gives a relation between $\left\Vert u\right\Vert $ and $\left\Vert
f\right\Vert $%
\begin{equation}
\left\Vert u\right\Vert \geq\beta\left\Vert f\right\Vert . \label{09}%
\end{equation}
Moreover, according to the definition of the Gevrey norm and the relation (%
\ref{09}
) we get%
\[
\left\Vert u\right\Vert _{G}\geq\beta\left\Vert f\right\Vert _{G}.
\]
The vorticity, $\omega=\nabla\times u$ satisfies the equation%
\begin{equation}
(\frac{d}{dt}+u.\nabla+\nu\triangle+\varepsilon\left(  -\triangle\right)
^{l})\omega=\omega.\nabla u+F \label{b2}%
\end{equation}
where $F=\nabla\times f.$

\begin{theorem}
\cite{16}Suppose that a nonzero function $h\in V_{1}$ satisfies%
\[
\left\Vert e^{\alpha A}h\right\Vert _{1}\leq M\left\Vert h\right\Vert _{1}%
\]
Then%
\[
\mathcal{H}^{2}\left(  N_{h}^{0}\right)  \leq C_{1}L^{2}\left(  1+\log
M\right)  e^{C_{2}L/\alpha}.
\]

\end{theorem}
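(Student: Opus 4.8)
The plan is to deduce the bound from the \emph{quantitative analyticity} encoded in the hypothesis $\|e^{\alpha A}h\|_1\le M\|h\|_1$, combined with a complex-analytic count of zeros. First I would pass from the Gevrey estimate to a pointwise holomorphic extension. Writing $h=\sum_k \hat h_k e^{2i\pi k\cdot x/L}$ and recalling that $A=-\triangle$ has eigenvalues $\lambda_k=(2\pi/L)^2|k|^2$, the substitution $x\mapsto x+iy$ gives $h(x+iy)=\sum_k \hat h_k e^{2i\pi k\cdot x/L}e^{-2\pi k\cdot y/L}$, whence $|h(x+iy)|\le \sum_k|\hat h_k|e^{\lambda_k^{1/2}|y|}$. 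A Cauchy--Schwarz split against the Gevrey weight furnished by the hypothesis shows that the series converges on a complex tube $T_\alpha=\{z\in\mathbb C^3:|\mathrm{Im}\,z|<c\alpha\}$ (modulo the period lattice) whose width is proportional to $\alpha$, and that $h$ extends there to a holomorphic function $H$ with $\sup_{T_\alpha}|H|\le C\,M\|h\|_1$ after absorbing the convergent theta-type factor. Thus the single global hypothesis yields control of $H$ on a complex neighborhood of $\Omega$ of width $\sim\alpha$, with the ratio of this sup-norm to $\|h\|_1$ at most $CM$.

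The next step is a \emph{local} estimate for the area of the real nodal set inside a ball. Fix $p\in\Omega$ and a radius $r\sim\alpha$ so that the complex ball $B^{\mathbb C}_{2r}(p)$ lies in $T_\alpha$. Restricting $H$ to a complex line $\ell$ through $p$ produces a holomorphic function of one variable, and Jensen's formula bounds the number $n_\ell$ of its zeros in the disk of radius $r$ by $(\log 2)^{-1}\log\big(\sup_{B^{\mathbb C}_{2r}(p)}|H|\,/\,|H(p)|\big)$, the \emph{frequency} (doubling) index $N_p$ of $h$ at $p$. Integrating over the directions of $\ell$ and invoking the integral-geometric (Crofton) formula converts this zero count into the area bound $\mathcal H^2\big(N_h^0\cap B_r(p)\big)\le C\,r^2\,N_p$. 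This is the mechanism by which analyticity controls codimension-one nodal measure: a nonzero holomorphic function cannot vanish on a set of Hausdorff dimension exceeding $2$, and its frequency index measures how much it does vanish.

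It remains to globalize and to control $N_p$ uniformly. I would cover $\Omega$ by $O\big((L/\alpha)^3\big)$ balls of radius $r\sim\alpha$ and sum the local bounds; the geometric normalization of the Crofton integral together with the ball count collapses the accumulated $r^2$ factors to the prefactor $C_1L^2$. The delicate point is that $N_p=\log\big(\sup_{B^{\mathbb C}_{2r}(p)}|H|/|H(p)|\big)$ is only meaningful where $h$ is not too small, and a priori $|H(p)|$ could be tiny at some centers. To handle this I would fix a reference ball on which $\sup|H|$ is comparable to $\|h\|_1$, and propagate a lower bound from it to every other ball by the Hadamard three-balls (log-convexity) inequality. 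Chaining the three-balls inequality along a path of overlapping balls joining the reference ball to $p$ --- a path of $O(L/\alpha)$ steps --- multiplies the interpolation exponents and therefore produces the exponential factor $e^{C_2L/\alpha}$ relating $|H(p)|$ to $\|h\|_1$, while the surviving ratio $\sup|H|/\|h\|_1\le CM$ contributes the $1+\log M$ term. Substituting these two factors into $\sum_p r^2 N_p$ yields $\mathcal H^2(N_h^0)\le C_1L^2(1+\log M)e^{C_2L/\alpha}$.

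The main obstacle is precisely this last, \emph{quantitative unique continuation} step: converting the single global Gevrey bound into a pointwise frequency bound valid \emph{at every center}, uniformly over the covering. The three-balls chaining is what forces the exponential dependence on $L/\alpha$, and getting the chain length (hence the constant $C_2$) correct while keeping the overlap and radius choices compatible with the tube width $\alpha$ of holomorphy is the crux of the argument; the complexification and the Jensen--Crofton count, by contrast, are essentially routine once the tube estimate is in hand.
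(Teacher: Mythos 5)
The paper contains no proof of this statement to compare against: it is imported verbatim from Kukavica \cite{16}, with the remark that it is a special case of Theorem 2.1 there. Your sketch is a correct reconstruction of the argument in that reference (and its Donnelly--Fefferman antecedents \cite{4,5}): Gevrey-to-complexification giving a holomorphic extension on a tube of width $\sim\alpha$ with sup-norm ratio $\lesssim M$, a Jensen-plus-Crofton count converting one-variable zero bounds into local nodal area $\lesssim r^{2}N_{p}$, and a covering with Hadamard three-balls chaining over $O(L/\alpha)$ steps --- where you correctly identify that the interpolation exponents \emph{multiply} along the chain, so the frequency degrades like $e^{CL/\alpha}(1+\log M)$ rather than additively, which is exactly what produces the factor $e^{C_{2}L/\alpha}$ in the final bound. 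One caveat worth recording: as transcribed the hypothesis reads $\left\Vert e^{\alpha A}h\right\Vert _{1}\leq M\left\Vert h\right\Vert _{1}$, which is dimensionally inconsistent with the conclusion's exponent $L/\alpha$; consistency with Lemma 3.3 and with the use of $e^{\alpha tA^{1/2}}$ elsewhere in the paper shows the intended operator is $e^{\alpha A^{1/2}}$, and your tube-width-proportional-to-$\alpha$ computation (growth $e^{\lambda_{k}^{1/2}|y|}$ against the Gevrey weight) tacitly and correctly assumes that reading.
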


Hereafter, $C_{i}$ for $i\in%
\mathbb{N}
$, stand for universal constants. The above statement is a special case of
\cite[Theorem 2.1]{16}. It will be used in conjunction with the following statement:

\begin{lemma}
\cite{15a}Let $u\in V_{0}$, and let $\omega$ and be its vorticity. If
\begin{equation}
\parallel A^{\frac{1}{2}}e^{\alpha A^{\frac{1}{2}}}u\parallel\leq M\parallel
A^{\frac{1}{2}}u\parallel\label{b8}%
\end{equation}
for some $M>0$, then, for every $c\in%
\mathbb{R}
$%
\begin{equation}
\parallel e^{\alpha A^{\frac{1}{2}}}\left(  \omega-c\right)  \parallel\leq
M\parallel\omega-c\parallel. \label{b9}%
\end{equation}

\end{lemma}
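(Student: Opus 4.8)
The plan is to pass to Fourier series and exploit the fact that both $A^{1/2}$ and the Gevrey operator $e^{\alpha A^{1/2}}$ act diagonally in the Fourier basis, while the curl intertwines velocity and vorticity in a weight-preserving way. Writing $u=\sum_{k\neq 0}u_k\exp(2\pi i k\cdot x/L)$ with the divergence-free constraint $k\cdot u_k=0$ and $\overline{u_k}=u_{-k}$, the Stokes operator acts by sending the $k$-th mode $u_k$ to $\lambda_k u_k$ with $\lambda_k=(2\pi/L)^2|k|^2$, so $A^{1/2}$ multiplies that mode by $\lambda_k^{1/2}$ and $e^{\alpha A^{1/2}}$ multiplies it by $e^{\alpha\lambda_k^{1/2}}$ (recall $\alpha\geq 0$ for the Gevrey class).

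The first key step is an exact identity for the vorticity modes. Since $\omega=\nabla\times u$ gives $\omega_k=(2\pi i/L)\,k\times u_k$, and since $k\cdot u_k=0$ forces $|k\times u_k|=|k|\,|u_k|$ (splitting $u_k$ into real and imaginary parts, each orthogonal to the real vector $k$), I obtain $|\omega_k|^2=\lambda_k|u_k|^2$. This single identity does all the work: summing against the Fourier weights yields at once $\|\omega\|=\|A^{1/2}u\|$ and $\|e^{\alpha A^{1/2}}\omega\|=\|A^{1/2}e^{\alpha A^{1/2}}u\|$, because in each pair the $k$-th summand carries the same factor $\lambda_k|u_k|^2$ together with the same Gevrey weight. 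Hypothesis (\ref{b8}) therefore translates verbatim into $\|e^{\alpha A^{1/2}}\omega\|\leq M\|\omega\|$, which is exactly (\ref{b9}) in the case $c=0$.

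To cover a general constant $c$, I would note that $u\in V_0$ has zero average, hence $\omega$ has zero average, and the shifted field $\omega-c$ places $-c$ entirely in the $k=0$ mode. On that mode $\lambda_0=0$, so $e^{\alpha A^{1/2}}$ acts as the identity and preserves the constant, giving
\[
\|e^{\alpha A^{1/2}}(\omega-c)\|^2=|\Omega|\Big(|c|^2+\sum_{k\neq 0}e^{2\alpha\lambda_k^{1/2}}|\omega_k|^2\Big),\qquad \|\omega-c\|^2=|\Omega|\Big(|c|^2+\sum_{k\neq 0}|\omega_k|^2\Big).
\]

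The remaining point, and the only one that is not a pure bookkeeping identity, is that the additive contribution $|c|^2$ must also be absorbed by the factor $M^2$. This is where I would use that $M\geq 1$ necessarily: since $e^{2\alpha\lambda_k^{1/2}}\geq 1$ for $\alpha\geq 0$, hypothesis (\ref{b8}) already forces $\|A^{1/2}e^{\alpha A^{1/2}}u\|\geq\|A^{1/2}u\|$, so $M\geq 1$ for nonzero $u$. With $M\geq 1$ in hand, combining the trivial bound $|c|^2\leq M^2|c|^2$ on the zero mode with the $c=0$ inequality on the nonzero modes yields (\ref{b9}) for every $c$. The main obstacle is thus conceptual rather than computational: recognizing that the incompressibility constraint upgrades the crude estimate $|\omega_k|\leq\lambda_k^{1/2}|u_k|$ to an exact equality, and that the forced inequality $M\geq 1$ is precisely what allows the constant $c$ to pass through the Gevrey operator unharmed.
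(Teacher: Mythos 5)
Your proof is correct. Note that the paper itself contains no proof of this lemma---it is quoted verbatim from Kukavica \cite{15a}---and your Fourier-side argument is essentially the standard proof of that cited result: diagonalizing $A^{1/2}$ and $e^{\alpha A^{1/2}}$ in the Fourier basis, upgrading $|\omega_k|\leq\lambda_k^{1/2}|u_k|$ to the exact identity $|\omega_k|^2=\lambda_k|u_k|^2$ via the incompressibility constraint $k\cdot u_k=0$ (so that the hypothesis transfers verbatim to the vorticity on the nonzero modes), and handling the constant $c$ through the zero mode, where $e^{\alpha A^{1/2}}$ acts as the identity and the forced bound $M\geq 1$ absorbs the $|c|^2$ term. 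One pedantic caveat that you already implicitly flag: for $u=0$ the hypothesis (\ref{b8}) holds vacuously for every $M>0$, while the conclusion (\ref{b9}) with $c\neq 0$ forces $M\geq 1$, so the statement as transcribed silently requires $u\neq 0$; for $u\neq 0$ your derivation of $M\geq 1$ from $e^{2\alpha\lambda_k^{1/2}}\geq 1$ (with $\alpha\geq 0$, as in the paper's definition of the Gevrey class) is airtight.
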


For the rest of the paper, let $u(t)$ be an arbitrary solution of the the
modified Navier Stokes system (%
\ref{1}
) with $u(0)=u_{0}$.

\begin{theorem}
Let $\left\Vert u\right\Vert \geq\beta\left\Vert f\right\Vert $ for any
$\alpha\leq\dfrac{\nu\lambda_{1}^{\frac{1}{2}}}{4}$ and $\beta\leq
\dfrac{4\sqrt{2}}{\nu}$. Then there exists a universal constant $C_{3}$ such
that if $\parallel A^{\frac{1}{2}}u\parallel\leq C_{3}$, then%
\begin{equation}
\parallel A^{\frac{1}{2}}e^{\alpha tA^{\frac{1}{2}}}u\parallel\leq2\parallel
A^{\frac{1}{2}}u_{0}\parallel,\text{ }t\geq0\text{.}\label{b3}%
\end{equation}

\end{theorem}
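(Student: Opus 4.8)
The plan is to establish a Gevrey-class bound via a differential inequality for the Gevrey norm $\parallel A^{\frac{1}{2}}e^{\alpha t A^{\frac{1}{2}}}u(t)\parallel$. First I would differentiate the quantity $\Phi(t)=\parallel A^{\frac{1}{2}}e^{\alpha t A^{\frac{1}{2}}}u(t)\parallel^{2}=(e^{\alpha t A^{\frac{1}{2}}}Au,e^{\alpha t A^{\frac{1}{2}}}u)$ in time, using the evolution equation \eqref{b1}. The time derivative produces two kinds of terms: the explicit $t$-dependence of the exponential weight, which contributes a factor $\alpha(A^{\frac{1}{2}}e^{\alpha t A^{\frac{1}{2}}}u,A e^{\alpha t A^{\frac{1}{2}}}u)$, and the terms coming from $du/dt=-\varepsilon A^{l}u-\nu Au-B(u,u)+f$. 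After applying the operator $e^{\alpha t A^{\frac{1}{2}}}$ and pairing against $A e^{\alpha t A^{\frac{1}{2}}}u$, I obtain a dissipative contribution $-\nu\parallel A e^{\alpha t A^{\frac{1}{2}}}u\parallel^{2}$ from the Stokes term, an additional good term $-\varepsilon\parallel A^{(l+1)/2}e^{\alpha t A^{\frac{1}{2}}}u\parallel^{2}$ from the hyperviscosity (which only helps), and the two problematic terms from the nonlinearity and the force.

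The key estimates are, first, to control the weighted nonlinear term $(e^{\alpha t A^{\frac{1}{2}}}B(u,u),A e^{\alpha t A^{\frac{1}{2}}}u)$ by a standard Gevrey-class trilinear estimate, of the form $\bigl|(e^{\alpha t A^{\frac{1}{2}}}B(u,u),A e^{\alpha t A^{\frac{1}{2}}}u)\bigr|\le C\parallel A^{\frac{1}{2}}e^{\alpha t A^{\frac{1}{2}}}u\parallel\parallel A e^{\alpha t A^{\frac{1}{2}}}u\parallel^{3/2}\parallel A^{\frac{1}{2}}e^{\alpha t A^{\frac{1}{2}}}u\parallel^{1/2}$, then absorb one factor of $\parallel A e^{\alpha t A^{\frac{1}{2}}}u\parallel^{2}$ into the dissipation via Young's inequality \eqref{7}; and second, to handle the weighted forcing term $(e^{\alpha t A^{\frac{1}{2}}}f,A e^{\alpha t A^{\frac{1}{2}}}u)$, again splitting off $\parallel A e^{\alpha t A^{\frac{1}{2}}}u\parallel^{2}$ and leaving $\parallel A^{\frac{1}{2}}f\parallel^{2}$, which by \eqref{09} is bounded in terms of $\parallel A^{\frac{1}{2}}u\parallel$ through the constant $\beta$. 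The cross term $\alpha(A^{\frac{1}{2}}e^{\alpha t A^{\frac{1}{2}}}u,A e^{\alpha t A^{\frac{1}{2}}}u)$ is likewise Cauchy--Schwarz-bounded by $\alpha\Phi^{1/2}\parallel A e^{\alpha t A^{\frac{1}{2}}}u\parallel$ and absorbed into the dissipation; this is precisely where the hypothesis $\alpha\le\nu\lambda_{1}^{1/2}/4$ is used, so that the residual factor stays below $\nu$ and the net coefficient of $\parallel A e^{\alpha t A^{\frac{1}{2}}}u\parallel^{2}$ remains negative.

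Collecting these bounds yields a differential inequality of the form $\frac{d}{dt}\Phi(t)\le -c_{0}\nu\lambda_{1}\Phi(t)+c_{1}\Phi(t)^{3}+(\text{lower order in }f)$ once the surplus dissipation is used, via Poincar\'e \eqref{8}, to create a linear decay term. The strategy is then a continuity (bootstrap) argument: under the smallness hypothesis $\parallel A^{\frac{1}{2}}u\parallel\le C_{3}$, where $C_{3}$ is chosen so that the cubic term $c_{1}\Phi^{3}$ is dominated by the linear decay term $c_{0}\nu\lambda_{1}\Phi$ whenever $\Phi\le 4\parallel A^{\frac{1}{2}}u_{0}\parallel^{2}$, the right-hand side of the differential inequality is non-positive on the relevant range. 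I would define $T^{*}=\sup\{t\ge0:\Phi(s)\le 4\parallel A^{\frac{1}{2}}u_{0}\parallel^{2}\ \text{for } s\le t\}$ and show that $\Phi$ cannot reach the threshold $4\parallel A^{\frac{1}{2}}u_{0}\parallel^{2}$, since at any such crossing point the derivative would be strictly negative; hence $T^{*}=\infty$ and \eqref{b3} holds for all $t\ge0$. The role of $\beta\le 4\sqrt{2}/\nu$ is to keep the forcing contribution compatible with this threshold.

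The main obstacle I anticipate is the Gevrey-class trilinear estimate for the analytic-weighted nonlinear term: unlike the ordinary $b(u,u,u)=0$ cancellation from \eqref{2b}, the weight $e^{\alpha t A^{\frac{1}{2}}}$ does not commute cleanly with $B(u,u)$, so the antisymmetry is broken and one must instead rely on the Gevrey-regularity machinery (the Foias--Temam-type estimate exploiting that $e^{\alpha t A^{\frac{1}{2}}}$ is subordinate to products in Fourier space). Verifying the exact exponents so that Young's inequality splits off exactly $\parallel A e^{\alpha t A^{\frac{1}{2}}}u\parallel^{2}$ and leaves a pure power of $\Phi$ — and checking that the threshold constant $C_{3}$ is genuinely universal rather than depending on $\nu,\lambda_{1},\alpha$ — is the delicate bookkeeping step on which the whole bootstrap rests.
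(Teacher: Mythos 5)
Your proposal is correct in outline and coincides with the paper's proof at the structural level: both differentiate the Gevrey quantity $\Phi(t)=\Vert A^{\frac{1}{2}}u\Vert_{G(t)}^{2}$ along \eqref{b1}, pick up the cross term $\alpha\Vert A^{\frac{3}{4}}u\Vert_{G(t)}^{2}$, discard the hyperviscous term as pure dissipation, absorb the $\alpha$-term and the forcing (via $\Vert u\Vert\geq\beta\Vert f\Vert$ transferred to the Gevrey norm) into $-\nu\Vert Au\Vert_{G(t)}^{2}$ using Young and Poincar\'e, and close with a smallness argument. Where you genuinely diverge is the key trilinear estimate and the endgame. The paper uses the \emph{logarithmic} Gevrey estimate \eqref{b5}, $b(u,u,Au)_{G(t)}\leq C_{4}\Vert A^{\frac{1}{2}}u\Vert_{G(t)}^{2}\Vert Au\Vert_{G(t)}\bigl(1+\log\tfrac{\Vert Au\Vert_{G(t)}^{2}}{\lambda_{1}\Vert A^{\frac{1}{2}}u\Vert_{G(t)}^{2}}\bigr)^{\frac{1}{2}}$, combined with the elementary inequality \eqref{b6} (with $d=\nu/8$) to absorb $\Vert Au\Vert_{G(t)}$, arriving at \eqref{b7}, i.e.\ $\tfrac{1}{2}\tfrac{d}{dt}\Phi\leq C_{5}\Phi^{2}\log(C_{6}\Phi^{1/2}/\lambda_{1}^{1/2})$; the hypothesis $\Vert A^{\frac{1}{2}}u_{0}\Vert<\lambda_{1}^{1/2}/C_{6}=C_{3}$ makes the logarithm negative, so $\Vert A^{\frac{1}{2}}u\Vert_{G(t)}$ is \emph{monotone decreasing} and \eqref{b3} holds with the factor $2$ as pure slack — no continuity argument and no threshold $4\Vert A^{\frac{1}{2}}u_{0}\Vert^{2}$ are needed. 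You instead use the power-type Gevrey bound $\lesssim\Vert A^{\frac{1}{2}}u\Vert_{G(t)}^{3/2}\Vert Au\Vert_{G(t)}^{3/2}$, which after Young leaves a cubic term $c_{1}\Phi^{3}$ and genuinely requires your bootstrap at the factor-of-$2$ barrier; this is the classical Foias--Temam route and is perfectly workable, but it buys a cruder smallness threshold (of order $\nu\lambda_{1}^{1/4}$ rather than the paper's $C_{3}\sim\nu\lambda_{1}^{1/2}$), whereas the paper's logarithmic lemma buys a sharper constant and a one-line monotonicity conclusion. Your two closing worries are both well placed, and in fact cut against the original as much as against your version: $C_{3}$ is not genuinely universal in either argument (unwinding \eqref{b6} gives $C_{6}\sim C_{4}/\nu$, so the paper's $C_{3}$ carries $\nu$ and $\lambda_{1}$), and the role of $\beta$ is directionally suspect — from $\Vert u\Vert\geq\beta\Vert f\Vert$ the forcing coefficient should be $\tfrac{1}{2\beta^{2}\lambda_{1}\nu}$, for which the stated \emph{upper} bound $\beta\leq 4\sqrt{2}/\nu$ is the wrong way around; the paper's proof writes $\tfrac{\beta^{2}}{2\lambda_{1}\nu}$ and then fixes $\beta^{2}=\tfrac{1}{2\lambda_{1}\alpha^{2}}$ mid-proof, so the bookkeeping weakness you flag is present in the paper's own argument, not introduced by yours.
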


\begin{proof}
For any $\alpha$, $t\geq0$, We take the inner product of (%
\ref{b1}
) with $u$, to obtain%
\begin{equation}%
\begin{array}
[c]{ll}%
\dfrac{1}{2}\dfrac{d}{dt}\Vert A^{\frac{1}{2}}u\Vert_{G\left(  t\right)  }^{2}
& =\alpha\Vert A^{\frac{3}{4}}u\Vert_{G\left(  t\right)  }^{2}+(A\dot
{u},u)_{G\left(  t\right)  }\\
& =\alpha\Vert A^{\frac{3}{4}}u\Vert_{G\left(  t\right)  }^{2}-\varepsilon
\Vert A^{\frac{l+1}{2}}u\Vert_{G\left(  t\right)  }^{2}-\nu\Vert
Au\Vert_{G\left(  t\right)  }^{2}-b(u,u,Au)_{G\left(  t\right)  }%
+(f,Au)_{G\left(  t\right)  }.
\end{array}
\label{b4}%
\end{equation}
then using the Young's inequality (%
\ref{7}
) we have%
\[%
\begin{array}
[c]{ll}%
\dfrac{1}{2}\dfrac{d}{dt}\Vert A^{\frac{1}{2}}u\Vert_{G\left(  t\right)  }^{2}
& \leq-\varepsilon\Vert A^{\frac{l+1}{2}}u\Vert_{G\left(  t\right)  }%
^{2}+\dfrac{\nu}{4}\Vert Au\Vert_{G\left(  t\right)  }^{2}+\dfrac{\alpha^{2}%
}{\nu}\Vert A^{\frac{1}{2}}u\Vert_{G\left(  t\right)  }^{2}-\nu\Vert
Au\Vert_{G\left(  t\right)  }^{2}\\
& +(\dfrac{\nu}{2}\Vert Au\Vert_{G\left(  t\right)  }^{2}+\dfrac{1}{2\nu}\Vert
f\Vert_{G\left(  t\right)  }^{2})+b(u,u,Au)_{G\left(  t\right)  }%
\end{array}
\]
From we get%
\[
\Vert Au\Vert_{G\left(  t\right)  }^{2}\geq\Vert f\Vert_{G\left(  t\right)
}^{2}%
\]

this give
\[%
\begin{array}
[c]{ll}%
\dfrac{1}{2}\dfrac{d}{dt}\Vert A^{\frac{1}{2}}u\Vert_{G\left(  t\right)  }^{2}
& \leq-\varepsilon\Vert A^{\frac{l+1}{2}}u\Vert_{G\left(  t\right)  }%
^{2}+\dfrac{\nu}{4}\Vert Au\Vert_{G\left(  t\right)  }^{2}+\dfrac{\alpha^{2}%
}{\nu}\Vert A^{\frac{1}{2}}u\Vert_{G\left(  t\right)  }^{2}-\nu\Vert
Au\Vert_{G\left(  t\right)  }^{2}\\
& +(\dfrac{\nu}{2}\Vert Au\Vert_{G\left(  t\right)  }^{2}+\dfrac{_{\beta^{2}}%
}{2\lambda_{1}\nu}\Vert A^{\frac{1}{2}}u\Vert_{G\left(  t\right)  }%
^{2})+b(u,u,Au)_{G\left(  t\right)  }\\
& \leq-\varepsilon\Vert A^{\frac{l+1}{2}}u\Vert_{G\left(  t\right)  }%
^{2}+\dfrac{-\lambda_{1}\nu}{4}\Vert A^{\frac{1}{2}}u\Vert_{G\left(  t\right)
}^{2}+\dfrac{\alpha^{2}}{\nu}\Vert A^{\frac{1}{2}}u\Vert_{G\left(  t\right)
}^{2}\\
& +\dfrac{\beta^{2}}{2\lambda_{1}\nu}\Vert A^{\frac{1}{2}}u\Vert_{G\left(
t\right)  }^{2}+b(u,u,Au)_{G\left(  t\right)  }.
\end{array}
\]
We get for $\beta^{2}=\frac{1}{2\lambda_{1}\alpha^{2}}$%
\[%
\begin{array}
[c]{ll}%
\dfrac{1}{2}\dfrac{d}{dt}\Vert A^{\frac{1}{2}}u\Vert_{G\left(  t\right)  }%
^{2}\leq & -\varepsilon\Vert A^{\frac{l+1}{2}}u\Vert_{G\left(  t\right)  }%
^{2}-\dfrac{\lambda_{1}\nu}{4}\Vert A^{\frac{1}{2}}u\Vert_{G\left(  t\right)
}^{2}\\
& +\dfrac{2\alpha^{2}}{\nu}\Vert A^{\frac{1}{2}}u\Vert_{G\left(  t\right)
}^{2}+b(u,u,Au)_{G\left(  t\right)  }.
\end{array}
\]
We use the following inequality from \cite{11} and \cite[Section 4]{15a}%
\begin{equation}
b(u,u,Au)_{G\left(  t\right)  }\leq C_{4}\Vert A^{\frac{1}{2}}u\Vert_{G\left(
t\right)  }^{2}\Vert Au\Vert_{G\left(  t\right)  }(1+\log\dfrac{\Vert
Au\Vert_{_{G\left(  t\right)  }}^{2}}{\lambda_{1}\Vert A^{\frac{1}{2}}%
u\Vert_{_{G\left(  t\right)  }}^{2}})^{\frac{1}{2}} \label{b5}%
\end{equation}
to obtain%
\[%
\begin{array}
[c]{ll}%
\frac{1}{2}\dfrac{d}{dt}\Vert A^{\frac{1}{2}}u\Vert_{_{G\left(  t\right)  }%
}^{2} & \leq-\varepsilon\Vert A^{\frac{l+1}{2}}u\Vert_{_{G\left(  t\right)  }%
}^{2}-\dfrac{\lambda_{1}\nu}{4}\Vert A^{\frac{1}{2}}u\Vert_{_{G\left(
t\right)  }}^{2}+\dfrac{2\alpha^{2}}{\nu}\Vert A^{\frac{1}{2}}u\Vert
_{_{G\left(  t\right)  }}^{2}\\
& +C_{4}\Vert A^{\frac{1}{2}}u\Vert_{_{G\left(  t\right)  }}^{2}\Vert
Au\Vert_{_{G\left(  t\right)  }}(1+\log\dfrac{\Vert Au\Vert_{_{G\left(
t\right)  }}^{2}}{\lambda_{1}\Vert A^{\frac{1}{2}}u\Vert_{_{G\left(  t\right)
}}^{2}})^{\frac{1}{2}}.
\end{array}
\]
To establish (%
\ref{b3}
) we use the estimate \cite{15a}%
\begin{equation}
a\ \mu(1+\log\dfrac{\mu^{2}}{b^{2}})^{\frac{1}{2}}\leq d\mu^{2}+\dfrac{a^{2}%
}{d^{2}}\log\dfrac{2a}{bd}\text{ \ \ \ \ \ }a,d>0,\mu\geq b>0. \label{b6}%
\end{equation}
By applying the Poincar\'{e}'s inequality (%
\ref{8}
), we have that for $\mu=\Vert Au\Vert_{G\left(  t\right)  }$ and
$d=\dfrac{\nu}{8}$%
\[%
\begin{array}
[c]{ll}%
\dfrac{1}{2}\dfrac{d}{dt}\Vert A^{\frac{1}{2}}u\Vert_{G\left(  t\right)  }%
^{2}+\varepsilon\Vert A^{\frac{l+1}{2}}u\Vert_{G\left(  t\right)  }^{2}\leq &
-\frac{\lambda_{1}\nu}{8}\Vert A^{\frac{1}{2}}u\Vert_{G\left(  t\right)  }%
^{2}+\dfrac{2\alpha^{2}}{\nu}\Vert A^{\frac{1}{2}}u\Vert_{G\left(  t\right)
}^{2}\\
& +C_{5}\Vert A^{\frac{1}{2}}u\Vert_{G\left(  t\right)  }^{4}\log\frac
{C_{6}\Vert A^{\frac{1}{2}}u\Vert_{G\left(  t\right)  }}{\lambda_{1}^{\frac
{1}{2}}}.
\end{array}
\]
Letting $\alpha\leq\dfrac{\nu\lambda_{1}^{\frac{1}{2}}}{4}$ we have for
$\beta\leq\dfrac{4\sqrt{2}}{\nu}$ that%
\begin{equation}
\frac{1}{2}\dfrac{d}{dt}\Vert A^{\frac{1}{2}}u\Vert_{G\left(  t\right)  }%
^{2}\leq C_{5}\Vert A^{\frac{1}{2}}u\Vert_{G\left(  t\right)  }^{4}\log
\frac{C_{6}\Vert A^{\frac{1}{2}}u\Vert_{\Vert_{G\left(  t\right)  }^{2}}%
}{\lambda_{1}^{\frac{1}{2}}}. \label{b7}%
\end{equation}
If $\Vert A^{\frac{1}{2}}u_{0}\Vert<\frac{\lambda_{1}^{\frac{1}{2}}}{C_{6}%
}=C_{3}$, the term with a logarithm in (%
\ref{b7}
) is negative at $t=0$, and thus (%
\ref{b7}
) implies that $\Vert A^{\frac{1}{2}}u\Vert_{G\left(  t\right)  }$ is a
decreasing function of $t$.
\end{proof}

Theorem 3.4. implies that, for any solution $u(t)$ of (%
\ref{1}
), the space analyticity radius of $u(t)$ goes to infinity as $t\rightarrow
\infty$.

Let $\Omega$ be a periodic box, for simplicity assume $\Omega=(0,L)^{3}$, $A$
has eigenvalues $0<\lambda_{1}<\lambda_{2}<...$ with corresponding eigenspaces
$E_{1}$, $E_{2}$, ... Let $P_{m}$ be the projection on the eigenspaces
$E_{1}\oplus E_{2}\oplus...\oplus E_{m}$ and let $Q_{m}=I-P_{m}$ we have$\Vert
u\Vert^{2}=\Vert P_{m}u\Vert^{2}+\Vert Q_{m}u\Vert^{2}$ and we also have from
(%
\ref{a5}
) that%
\begin{equation}
\Vert A^{l}u\Vert\leq\lambda_{m}^{l}\Vert u\Vert\text{ for every }l\geq0\text{
and }u\in D(A^{l}). \label{c1}%
\end{equation}
For any $t\geq0$, let $\omega(t)$ be the vorticity of $u(t)$. We shall, for
any fixed $t>0$, estimate the quantity%
\begin{equation}
l\left(  \omega\left(  t\right)  \right)  =\sup_{c\in%
\mathbb{R}
}\mathcal{H}_{l}^{2}\left(  N_{\omega}^{c}\right)  . \label{c13}%
\end{equation}
Recall that for a function $h:\Omega\rightarrow%
\mathbb{R}
$, $N_{h}^{0}=\{x\in\Omega:h(x)=0\}$. We need the following fact

\begin{lemma}
Let $\left\Vert u\right\Vert \geq\beta\left\Vert f\right\Vert $ for any
$\beta\geq0$. Then%
\begin{equation}
\Vert u\left(  t\right)  \Vert\geq\Vert u(0)\Vert\exp(\eta t)\text{ for every
}t\geq0\text{.} \label{c2}%
\end{equation}
With $\eta=-(\varepsilon\lambda_{m}^{\frac{l}{2}}+\frac{1+\beta^{2}}%
{2\beta^{2}})$.
\end{lemma}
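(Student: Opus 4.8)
The plan is to derive an energy inequality by testing the evolution form \eqref{b1} against $u$, and then to convert it into the stated exponential lower bound via Gronwall's lemma. First I would take the $L^{2}$ inner product of \eqref{b1} with $u$. Since $u$ is divergence free, the orthogonality relation $b(u,u,u)=0$ from \eqref{2b} annihilates the inertial contribution $\langle B(u,u),u\rangle$, and using $(A^{l}u,u)=\Vert A^{l/2}u\Vert^{2}$ together with $(Au,u)=\Vert A^{1/2}u\Vert^{2}$ one is left with the energy identity
\begin{equation*}
\tfrac{1}{2}\tfrac{d}{dt}\Vert u\Vert^{2}=(f,u)-\varepsilon\Vert A^{l/2}u\Vert^{2}-\nu\Vert A^{1/2}u\Vert^{2}.
\end{equation*}

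Because the goal is a lower bound (the solution may not decay faster than exponentially), I would estimate every term on the right from below. For the forcing term, Cauchy--Schwarz followed by Young's inequality \eqref{7} gives $(f,u)\geq-\tfrac{1}{2}\Vert f\Vert^{2}-\tfrac{1}{2}\Vert u\Vert^{2}$, and the hypothesis $\Vert u\Vert\geq\beta\Vert f\Vert$, equivalently $\Vert f\Vert\leq\beta^{-1}\Vert u\Vert$, then yields
\begin{equation*}
(f,u)\geq-\Bigl(\tfrac{1}{2\beta^{2}}+\tfrac{1}{2}\Bigr)\Vert u\Vert^{2}=-\tfrac{1+\beta^{2}}{2\beta^{2}}\Vert u\Vert^{2},
\end{equation*}
which is precisely the second contribution to $\eta$. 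The dissipative terms carry the unfavourable sign, so I would control them from above by restricting to the range of $P_{m}$ and invoking the spectral inequality \eqref{c1}; this bounds $\Vert A^{l/2}u\Vert^{2}$ by a multiple of $\lambda_{m}^{l/2}\Vert u\Vert^{2}$ and absorbs the nonpositive viscous term, producing $-\varepsilon\Vert A^{l/2}u\Vert^{2}-\nu\Vert A^{1/2}u\Vert^{2}\geq-\varepsilon\lambda_{m}^{l/2}\Vert u\Vert^{2}$.

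Combining the two estimates yields the differential inequality $\tfrac{d}{dt}\Vert u\Vert^{2}\geq 2\eta\Vert u\Vert^{2}$ with $\eta=-(\varepsilon\lambda_{m}^{l/2}+\tfrac{1+\beta^{2}}{2\beta^{2}})$. Integrating this from $0$ to $t$ (Gronwall) gives $\Vert u(t)\Vert^{2}\geq\Vert u(0)\Vert^{2}e^{2\eta t}$, and taking square roots delivers \eqref{c2}.

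The delicate step is the treatment of the dissipation. Since we are after a lower bound, the naturally dissipative (negative) contributions $-\varepsilon\Vert A^{l/2}u\Vert^{2}$ and $-\nu\Vert A^{1/2}u\Vert^{2}$ must be bounded \emph{from above}, which is only legitimate on the finite-dimensional low-mode subspace where \eqref{c1} applies. This is where the factor $\lambda_{m}^{l/2}$ enters, and the precise exponent on $\lambda_{m}$, together with the accounting of the viscous $\nu$-term into this single dissipative constant, is the bookkeeping that must be carried out carefully; everything else is a routine energy computation.
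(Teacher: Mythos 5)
Your opening matches the paper's proof of this lemma exactly: take the scalar product of \eqref{b1} with $u$, kill the inertial term via \eqref{2b} to get the energy identity \eqref{c10}, estimate the forcing by $(f,u)\geq-\tfrac{1}{2}\Vert f\Vert^{2}-\tfrac{1}{2}\Vert u\Vert^{2}$ and feed in $\Vert f\Vert\leq\beta^{-1}\Vert u\Vert$ to produce the $\tfrac{1+\beta^{2}}{2\beta^{2}}$ contribution to $\eta$. The genuine gap is in your treatment of the dissipation. The claimed bound $-\varepsilon\Vert A^{l/2}u\Vert^{2}-\nu\Vert A^{1/2}u\Vert^{2}\geq-\varepsilon\lambda_{m}^{l/2}\Vert u\Vert^{2}$ fails on two counts. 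First, even granting the spectral inequality \eqref{c1}, applying it to $A^{l/2}$ and squaring gives $\varepsilon\Vert A^{l/2}u\Vert^{2}\leq\varepsilon\lambda_{m}^{l}\Vert u\Vert^{2}$, so your absorption would put $\lambda_{m}^{l}$, not $\lambda_{m}^{l/2}$, into $\eta$. Second, and more seriously, the viscous term cannot be absorbed into the $\varepsilon$-term at no cost: $\nu$ and $\varepsilon$ are independent parameters, and the best spectral minorization is $-\nu\Vert A^{1/2}u\Vert^{2}\geq-\nu\lambda_{m}\Vert u\Vert^{2}$, so your route proves the lemma only with the worse rate $\eta'=-\bigl(\varepsilon\lambda_{m}^{l}+\nu\lambda_{m}+\tfrac{1+\beta^{2}}{2\beta^{2}}\bigr)$, in which the stated $\eta$ does not appear. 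The paper handles this term by a different idea that your proposal is missing: it invokes the a priori bound $\Vert A^{1/2}u(t)\Vert\leq C_{3}$ carried over from Theorem 3.4 (used tacitly, though absent from the lemma's statement), so the viscous contribution is bounded below by an additive constant $-C_{3}$ rather than by a multiple of $\Vert u\Vert^{2}$; the constant is then discarded after integrating \eqref{c17}. Without some such a priori control of $\Vert A^{1/2}u\Vert$, there is no way to make the $\nu$-term vanish from the exponential rate.

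There is a second weak point: ``restricting to the range of $P_{m}$'' is not a move available to you. The solution $u(t)$ of \eqref{b1} is not supported on the first $m$ modes --- the nonlinearity $B(u,u)$ couples low and high modes --- and the inequality $\Vert A^{l}u\Vert\leq\lambda_{m}^{l}\Vert u\Vert$ is valid only on $P_{m}H$; for general $u\in D(A^{l})$ the expansion \eqref{a5} yields only the reverse-type bound $\Vert A^{l}u\Vert\geq\lambda_{1}^{l}\Vert u\Vert$. You correctly flagged this as the delicate step, but flagging it is not bridging it. In fairness, the paper itself states \eqref{c1} for all $u\in D(A^{l})$ and applies it to the solution without justification, so here your proposal reproduces rather than repairs a weakness of the source; and your Gronwall endgame (integrating $\tfrac{d}{dt}\Vert u\Vert^{2}\geq2\eta\Vert u\Vert^{2}$ directly) is actually cleaner than the paper's, which integrates the mixed inequality $\tfrac{1}{2}\tfrac{d}{dt}\Vert u\Vert^{2}\geq-C_{3}+\eta\Vert u\Vert$ and then drops a term whose sign does not cooperate. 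But the differential inequality on which your endgame rests is not established by the argument you give.
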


\begin{proof}
Taking the scalar product of both sides of (%
\ref{1}
) by $u(t)$ and using (%
\ref{2b}
), we have that%
\begin{equation}
\frac{1}{2}\dfrac{d}{dt}\Vert u\Vert^{2}+\nu\Vert A^{\frac{1}{2}}u\Vert
^{2}+\varepsilon\Vert A^{\frac{l}{2}}u\Vert^{2}=\left(  f,u\right)  \text{ for
}t\geq0. \label{c10}%
\end{equation}
Using (%
\ref{c1}
) and the following inequality%
\begin{equation}
\left(  f,u\right)  \geq-\dfrac{1}{2}\Vert f\Vert^{2}-\frac{1}{2}\Vert
u\Vert^{2} \label{c11}%
\end{equation}
Because the increasing sequence $0\leq\lambda_{1}\leq\lambda\leq\lambda_{m}$
we have%
\begin{equation}
\dfrac{1}{2}\dfrac{d}{dt}\Vert u\Vert^{2}\geq(-\nu\lambda\Vert A^{\frac{1}{2}%
}u\Vert^{2}-(\varepsilon\lambda_{m}^{\frac{l}{2}}+\dfrac{1+\beta^{2}}%
{2\beta^{2}})\Vert u\Vert^{2} \label{c12}%
\end{equation}
note that since%
\[
\parallel A^{\frac{1}{2}}u\parallel\leq C_{3}%
\]
we have that%
\begin{equation}
\dfrac{1}{2}\dfrac{d}{dt}\Vert u\Vert^{2}\geq-C_{3}-(\varepsilon\lambda
_{m}^{\frac{l}{2}}+\dfrac{1+\beta^{2}}{2\beta^{2}})\Vert u\Vert\label{c17}%
\end{equation}
if we set $\eta=-(\varepsilon\lambda_{m}^{\frac{l}{2}}+\dfrac{1+\beta^{2}%
}{2\beta^{2}})$ then we have from (%
\ref{c17}
) that%
\[
\dfrac{1}{2}\dfrac{d}{dt}\Vert u\Vert^{2}\geq-C_{3}+\eta\Vert u\Vert.
\]
Integrating the above inequality from $0$ to $t$, we get%
\begin{equation}
\Vert u\Vert^{2}\geq\frac{-C_{3}}{\eta}\left(  1-\exp(\eta t)\right)  +\Vert
u(0)\Vert^{2}\exp(\eta t) \label{c18}%
\end{equation}
or, since%
\[
\frac{-C_{3}}{\eta}\left(  1-\exp(\eta t)\right)  \geq0.
\]
Thus, we have the inequality (%
\ref{c2}
).
\end{proof}

\begin{proposition}
Let $\left\Vert u\right\Vert \geq\beta\left\Vert f\right\Vert $ and $u_{0}%
\neq0$, and suppose that $\left\Vert u_{0}\right\Vert \leq C_{3}\nu\lambda
_{1}^{\frac{1}{2}}$.Then we have that%
\begin{equation}
l\left(  \omega\left(  t\right)  \right)  \leq C_{1}L(1+\frac{1}{2}%
Log\dfrac{\lambda_{m}}{\lambda_{1}}+(\varepsilon\lambda_{m}^{\frac{l}{2}%
}+\frac{1+\beta^{2}}{2\beta^{2}})t)e^{\frac{C_{2}L}{\alpha t}}\text{
for\ }t\geq0, \label{c14}%
\end{equation}
for any $\alpha\leq\dfrac{\nu\lambda_{1}^{\frac{1}{2}}}{4}$ and $\beta
\leq\dfrac{4\sqrt{2}}{\nu}$.
\end{proposition}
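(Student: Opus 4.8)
The plan is to reduce the area estimate $l(\omega(t))=\sup_{c}\mathcal{H}_{l}^{2}(N_{\omega}^{c})$ defined in \eqref{c13} to the bound of \cite{16} (Theorem 3.2 above) applied to the shifted vorticity $h=\omega-c$. That theorem requires an analyticity (Gevrey) estimate $\|e^{\alpha' A^{1/2}}(\omega-c)\|\leq M\|\omega-c\|$, and the lemma from \cite{15a} (the inequalities \eqref{b8}--\eqref{b9}) supplies exactly such an estimate, with the same constant $M$ and radius $\alpha'$, as soon as the \emph{velocity} obeys $\|A^{1/2}e^{\alpha' A^{1/2}}u\|\leq M\|A^{1/2}u\|$. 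Hence the whole proof reduces to producing, for each fixed $t>0$, an admissible constant $M=M(t)$ at radius $\alpha'=\alpha t$, and then controlling $\log M(t)$.

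First I would invoke Theorem 3.4: under the standing hypotheses $\|u\|\geq\beta\|f\|$, $\alpha\leq\nu\lambda_{1}^{1/2}/4$, $\beta\leq 4\sqrt{2}/\nu$, together with $\|u_{0}\|\leq C_{3}\nu\lambda_{1}^{1/2}$ (which secures $\|A^{1/2}u_{0}\|\leq C_{3}$), the Gevrey bound \eqref{b3} holds for all $t\geq0$, namely $\|A^{1/2}e^{\alpha tA^{1/2}}u(t)\|\leq 2\|A^{1/2}u_{0}\|$. Dividing by $\|A^{1/2}u(t)\|$ this is the hypothesis of the \cite{15a} lemma at radius $\alpha t$ with
\[
M(t)=\frac{2\|A^{1/2}u_{0}\|}{\|A^{1/2}u(t)\|}.
\]
Feeding the resulting vorticity estimate \eqref{b9} into the theorem of \cite{16} gives, for every $c\in\mathbb{R}$,
\[
\mathcal{H}_{l}^{2}(N_{\omega}^{c})\leq C_{1}L\big(1+\log M(t)\big)\,e^{C_{2}L/(\alpha t)},
\]
so that the only remaining task is to bound $1+\log M(t)$ by the bracketed quantity in \eqref{c14}.

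The quantitative heart of the argument is the estimate of $\log M(t)$. For the denominator I would combine Poincar\'{e}'s inequality \eqref{8}, which gives $\|A^{1/2}u(t)\|\geq\lambda_{1}^{1/2}\|u(t)\|$, with the exponential lower bound \eqref{c2} of Lemma 3.5, $\|u(t)\|\geq\|u_{0}\|e^{\eta t}$ with $\eta=-(\varepsilon\lambda_{m}^{l/2}+\frac{1+\beta^{2}}{2\beta^{2}})$, to obtain $\|A^{1/2}u(t)\|\geq\lambda_{1}^{1/2}\|u_{0}\|e^{\eta t}$. For the numerator I would use the spectral estimate \eqref{c1} with exponent $1/2$, $\|A^{1/2}u_{0}\|\leq\lambda_{m}^{1/2}\|u_{0}\|$. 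Dividing the two bounds,
\[
M(t)\leq 2\Big(\frac{\lambda_{m}}{\lambda_{1}}\Big)^{1/2}e^{-\eta t},
\]
whence $\log M(t)\leq\log2+\frac{1}{2}\log\frac{\lambda_{m}}{\lambda_{1}}+(\varepsilon\lambda_{m}^{l/2}+\frac{1+\beta^{2}}{2\beta^{2}})t$, the additive $\log 2$ being absorbed into the constant $C_{1}$. Since the resulting bound on $\mathcal{H}_{l}^{2}(N_{\omega}^{c})$ is independent of $c$, taking the supremum over $c\in\mathbb{R}$ in \eqref{c13} yields precisely \eqref{c14}.

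The step I expect to be most delicate is the control of the numerator through \eqref{c1}: the inequality $\|A^{1/2}w\|\leq\lambda_{m}^{1/2}\|w\|$ is valid as stated only for $w=P_{m}w$ lying in the first $m$ eigenspaces, so one must either restrict to the low-mode part carrying the energy or read $\lambda_{m}$ as the spectral cutoff fixing the number of determining modes; this is exactly where the dependence on $m$, and hence on the Landau--Lifschitz degree-of-freedom count, enters the final bound. A secondary point to reconcile is the order of the Gevrey class, since \cite{16} is phrased with $e^{\alpha A}$ while \eqref{b8}--\eqref{b9} uses $e^{\alpha A^{1/2}}$; I would read the former with $A^{1/2}$ throughout, consistent with the analyticity-radius interpretation recorded just after Theorem 3.4.
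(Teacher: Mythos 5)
Your proposal follows essentially the same route as the paper's own proof: Theorem 3.4 gives the Gevrey bound $\Vert A^{\frac{1}{2}}e^{\alpha tA^{\frac{1}{2}}}u\Vert\leq2\Vert A^{\frac{1}{2}}u_{0}\Vert$, the spectral estimate and Lemma 3.5 together with Poincar\'{e}'s inequality convert this into $M(t)\leq2(\lambda_{m}/\lambda_{1})^{\frac{1}{2}}\exp(\varepsilon\lambda_{m}^{\frac{l}{2}}+\frac{1+\beta^{2}}{2\beta^{2}})t$, and Lemma 3.3 plus Theorem 3.2 then deliver \eqref{c14}. You have in fact made explicit two steps the paper leaves implicit (the computation of $\log M(t)$ and the restriction of the spectral inequality \eqref{c1} to the range of $P_{m}$), so your argument is, if anything, a more careful version of the same proof.
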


\begin{proof}
By Theorem 3.4 and Lemma 3.5, we get for $t\geq0$ the following%
\begin{equation}%
\begin{array}
[c]{ll}%
\parallel A^{\frac{1}{2}}e^{\alpha tA^{\frac{1}{2}}}u\parallel &
\leq2\parallel A^{\frac{1}{2}}u_{0}\parallel\\
& \leq2\lambda^{\frac{1}{2}}\left\Vert u_{0}\right\Vert
\end{array}
\label{c15}%
\end{equation}
and use the inequality (%
\ref{c2}
)\ to get%
\begin{equation}%
\begin{array}
[c]{ll}%
\parallel A^{\frac{1}{2}}e^{\alpha tA^{\frac{1}{2}}}u\parallel & \leq
2\lambda^{\frac{1}{2}}\left\Vert u\left(  t\right)  \right\Vert \exp
(\varepsilon\lambda_{m}^{\frac{l}{2}}+\frac{1+\beta^{2}}{2\beta^{2}})t\\
& \leq2(\frac{\lambda}{\lambda_{1}})^{\frac{1}{2}}\parallel A^{\frac{1}{2}%
}u\left(  t\right)  \parallel\exp(\varepsilon\lambda_{m}^{\frac{l}{2}}%
+\frac{1+\beta^{2}}{2\beta^{2}})t.
\end{array}
\label{c16}%
\end{equation}
The rest follows by combining (%
\ref{c16}
) with Lemma 3.3 and Theorem 3.2.
\end{proof}

The foundational result for our two-dimensional Hausdorff measure estimates of
$N_{\omega}^{0}=\{x\in\Omega\subset%
\mathbb{R}
^{3}/$ $\omega(x,t)=0\}$ the level sets of the vorticity $\omega$\ of
solutions is

\begin{theorem}
Let $\left\Vert u\right\Vert \geq\beta\left\Vert f\right\Vert $ and $u_{0}%
\neq0$, and suppose that $\left\Vert u_{0}\right\Vert \leq C_{3}\nu\lambda
_{1}^{\frac{1}{2}}$. Then%
\begin{equation}
l\left(  \omega\left(  t\right)  \right)  \leq C_{7}\lambda_{m}^{\frac{l}{2}%
}\text{\ for\ }t\geq t_{0}, \label{c4}%
\end{equation}
with $t_{0}=\frac{2C_{2}L}{\nu\lambda_{1}^{\frac{1}{2}}}$ for any $\alpha
\leq\dfrac{\nu\lambda_{1}^{\frac{1}{2}}}{4}$ and $\beta\leq\dfrac{4\sqrt{2}%
}{\nu}$.
\end{theorem}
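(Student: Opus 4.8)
The plan is to establish \eqref{c4} \emph{uniformly} for all $t\ge t_{0}$ by a restart (time-translation) argument, rather than by inserting $t=t_{0}$ into the $t$-dependent estimate \eqref{c14} of Proposition 3.6, whose right-hand side in fact grows linearly in $t$. The source of that growth is the lower bound \eqref{c2}: running Lemma 3.5 from the initial time loses a factor $e^{\eta t}$ over all of $[0,t]$, which forces the constant $M$ in the hypothesis of Lemma 3.3 to grow like $\exp((\varepsilon\lambda_{m}^{l/2}+\frac{1+\beta^{2}}{2\beta^{2}})t)$. Since \eqref{1} is autonomous, I would instead measure this loss only across a window of the \emph{fixed} length $t_{0}$, which keeps both the analyticity radius and $\log M$ bounded independently of $t$.

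First I would verify that the smallness hypothesis of Theorem 3.4 propagates forward in time. Its proof shows that, under $\Vert A^{\frac12}u_{0}\Vert<C_{3}$, the Gevrey norm $\Vert A^{\frac12}u\Vert_{G(t)}=\Vert A^{\frac12}e^{\alpha tA^{\frac12}}u\Vert$ is decreasing; since it equals $\Vert A^{\frac12}u_{0}\Vert$ at $t=0$ and dominates the plain norm $\Vert A^{\frac12}u(t)\Vert$, one obtains $\Vert A^{\frac12}u(s)\Vert\le\Vert A^{\frac12}u_{0}\Vert<C_{3}$ for every $s\ge0$. Thus the hypothesis holds along the whole orbit and the flow may be restarted at any instant.

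Fix $t\ge t_{0}$ and apply Theorem 3.4 and Lemma 3.5 to the shifted solution $v(s)=u(t-t_{0}+s)$, $s\in[0,t_{0}]$, whose datum $v(0)=u(t-t_{0})$ satisfies $\Vert A^{\frac12}v(0)\Vert<C_{3}$ by the previous step. Choosing the Gevrey radius $\rho_{0}=\alpha t_{0}$ and evaluating at $s=t_{0}$ gives $\Vert A^{\frac12}e^{\rho_{0}A^{\frac12}}u(t)\Vert\le 2\Vert A^{\frac12}u(t-t_{0})\Vert$, while Lemma 3.5 for $v$ yields $\Vert u(t-t_{0})\Vert\le\Vert u(t)\Vert e^{-\eta t_{0}}$ with $-\eta=\varepsilon\lambda_{m}^{l/2}+\frac{1+\beta^{2}}{2\beta^{2}}$. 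Combining these with Poincar\'{e}'s inequality \eqref{8} and the spectral bound \eqref{c1} produces
\[
\Vert A^{\frac{1}{2}}e^{\rho_{0}A^{\frac{1}{2}}}u(t)\Vert\le M\,\Vert A^{\frac{1}{2}}u(t)\Vert,\qquad M\le 2\Bigl(\tfrac{\lambda_{m}}{\lambda_{1}}\Bigr)^{\frac{1}{2}}\exp\!\Bigl(\bigl(\varepsilon\lambda_{m}^{\frac{l}{2}}+\tfrac{1+\beta^{2}}{2\beta^{2}}\bigr)t_{0}\Bigr),
\]
in which the exponent now carries the \emph{fixed} constant $t_{0}$, so that $M$ is bounded independently of $t$.

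Feeding this $M$ into Lemma 3.3 transfers the Gevrey bound from $u(t)$ to $\omega(t)-c$, and Theorem 3.2 then gives $\mathcal{H}^{2}(N_{\omega}^{c})\le C_{1}L^{2}(1+\log M)e^{C_{2}L/\rho_{0}}$ for every $c$ and every $t\ge t_{0}$. With the admissible choice $\alpha=\frac{\nu\lambda_{1}^{1/2}}{4}$ one has $\rho_{0}=\alpha t_{0}=\frac{C_{2}L}{2}$, so $e^{C_{2}L/\rho_{0}}=e^{2}$ is constant and $1+\log M\le C+\frac12\log\frac{\lambda_{m}}{\lambda_{1}}+(\varepsilon\lambda_{m}^{l/2}+\frac{1+\beta^{2}}{2\beta^{2}})t_{0}$; taking the supremum over $c$ then bounds $l(\omega(t))$ uniformly in $t\ge t_{0}$. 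Since $l>1$ gives $\log\lambda_{m}=o(\lambda_{m}^{l/2})$ and $1\le(\lambda_{m}/\lambda_{1})^{l/2}$, every term is controlled by a constant multiple of $\lambda_{m}^{l/2}$, which yields \eqref{c4}. I expect the main obstacle to be precisely this uniformity in $t$: keeping both $e^{C_{2}L/\rho_{0}}$ and $\log M$ bounded for arbitrarily large $t$, which is exactly what the restart buys by localizing the exponential loss of Lemma 3.5 to the single window $[t-t_{0},t]$ instead of the full interval $[0,t]$; the subsidiary points to watch are the forward propagation of $\Vert A^{\frac12}u\Vert<C_{3}$ and the compatibility of $\rho_{0}=\alpha t_{0}$ with the admissible ranges $\alpha\le\frac{\nu\lambda_{1}^{1/2}}{4}$, $\beta\le\frac{4\sqrt2}{\nu}$.
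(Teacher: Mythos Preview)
Your argument is correct and takes a genuinely different route from the paper. The paper's proof consists of two sentences: it simply inserts $t\ge t_{0}$ into the estimate \eqref{c14} of Proposition~3.6 and asserts that this yields
\[
l(\omega(t))\le C_{1}e\Bigl(2+\tfrac12\log\tfrac{\lambda_{m}}{\lambda_{1}}+\bigl(\varepsilon\lambda_{m}^{l/2}+\tfrac{1+\beta^{2}}{2\beta^{2}}\bigr)\tfrac{C_{2}L}{\alpha}\Bigr),
\]
from which \eqref{c4} follows since $\lambda_{m}\ge\lambda_{1}$. In effect the paper evaluates \eqref{c14} at $t=t_{0}$ and treats that value as if it bounded $l(\omega(t))$ for all later times, without addressing the linear growth in $t$ that you correctly flag. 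Your restart argument, by contrast, localizes the exponential loss from Lemma~3.5 to a window of fixed length $t_{0}$ and thereby produces a constant $M$ independent of $t$; the extra ingredients this requires---forward propagation of the smallness $\Vert A^{1/2}u\Vert<C_{3}$ via the monotonicity in the proof of Theorem~3.4, and the choice $\rho_{0}=\alpha t_{0}$ compatible with $\alpha\le\frac{\nu\lambda_{1}^{1/2}}{4}$---are handled correctly. What your approach buys is a genuinely uniform-in-time bound for $t\ge t_{0}$; what the paper's approach buys is brevity, at the price of leaving that uniformity unjustified.
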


\begin{proof}
With $t\geq\frac{2C_{2}L}{\nu\lambda_{1}^{\frac{1}{2}}}$ the inequality (%
\ref{c14}
) implies%
\begin{equation}
l\left(  \omega\left(  t\right)  \right)  \leq C_{1}e(2+\dfrac{1}{2}%
Log\dfrac{\lambda_{m}}{\lambda_{1}}+(\varepsilon\lambda_{m}^{\frac{l}{2}%
}+\frac{1+\beta^{2}}{2\beta^{2}})\frac{C_{2}L}{\alpha})\text{\ for\ }t\geq0.
\label{c5}%
\end{equation}
Since $\lambda_{m}\geq\lambda_{1}$ (%
\ref{c5}
) follows directly from the above inequality.
\end{proof}

The estimate of the Hausdorff measure $\mathcal{H}_{l}^{2}$ grows in $m$ due
to the term $\frac{\lambda_{m}}{\lambda_{1}}$ but at a rate lower than
$\frac{l}{3}$.

\begin{proposition}
Let $\left\Vert u\right\Vert \geq\beta\left\Vert f\right\Vert $ and $u_{0}%
\neq0$, and suppose that $\left\Vert u_{0}\right\Vert \leq C_{3}\nu\lambda
_{1}^{\frac{1}{2}}$. Then%
\begin{equation}
\sup_{t\rightarrow\infty}l\left(  \omega\left(  t\right)  \right)  \leq
C_{8}m^{\frac{l}{3}}\text{\ for\ }t\geq t_{0}. \label{c6}%
\end{equation}
for any $\alpha\leq\dfrac{\nu\lambda_{1}^{\frac{1}{2}}}{4}$ and $\beta
\leq\dfrac{4\sqrt{2}}{\nu}$.
\end{proposition}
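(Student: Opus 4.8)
The plan is to obtain \eqref{c6} directly from the spectral bound \eqref{c4} of Theorem 3.7 by trading the eigenvalue $\lambda_m$ for a power of the mode number $m$. Under the present hypotheses on $u_0$, $\alpha$ and $\beta$ --- which are exactly those of Theorem 3.7 --- that theorem already furnishes, uniformly for $t\geq t_0$,
\[
l\left(\omega(t)\right)\leq C_7\,\lambda_m^{\frac{l}{2}}.
\]
Thus the entire content of the proposition reduces to the spectral comparison between $\lambda_m$, the $m$-th eigenvalue of $A=-\triangle$ on the periodic box $\Omega=(0,L)^3$, and $m$ itself.

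First I would use the explicit spectrum of $A$ on the periodic cube. Its eigenvalues are $(2\pi/L)^2|k|^2$ with $k\in\mathbb{Z}^3$, so the counting function $N(\lambda)=\#\{j:\lambda_j\leq\lambda\}$ equals the number of lattice points in a ball of radius $(L/2\pi)\sqrt{\lambda}$, which is comparable to the volume of that ball. This yields the Weyl-type two-sided estimate $c\,|\Omega|\,\lambda^{3/2}\leq N(\lambda)\leq C\,|\Omega|\,\lambda^{3/2}$ for all $\lambda\geq\lambda_1$, the dimension here being $d=3$. Setting $\lambda=\lambda_m$ and using $m=N(\lambda_m)$ gives $m\asymp|\Omega|\,\lambda_m^{3/2}$, and since $\lambda_1^{3/2}|\Omega|$ is a pure constant on the cube, we obtain
\[
\lambda_m\leq C\,\lambda_1\,m^{\frac{2}{3}}
\]
for a universal constant $C$. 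This is precisely the Landau--Lifschitz estimate of the number of degrees of freedom in a 3D turbulent flow invoked in the Introduction.

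Next I would substitute this into \eqref{c4}. Raising to the power $l/2$,
\[
\lambda_m^{\frac{l}{2}}\leq\bigl(C\lambda_1 m^{\frac{2}{3}}\bigr)^{\frac{l}{2}}=C^{\frac{l}{2}}\lambda_1^{\frac{l}{2}}\,m^{\frac{l}{3}},
\]
so that $l(\omega(t))\leq C_7\,C^{l/2}\lambda_1^{l/2}\,m^{l/3}=:C_8\,m^{l/3}$ for every $t\geq t_0$, where $C_8$ absorbs the $l$- and $\lambda_1$-dependent factors. Because this bound does not depend on $t$ throughout the range $t\geq t_0$, passing to the limit superior as $t\to\infty$ leaves it unchanged, which is exactly \eqref{c6}.

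The main obstacle is quantitative rather than conceptual: Weyl's law is usually stated asymptotically, whereas the proposition requires a bound for the given finite $m$. I would circumvent this by exploiting that on the periodic cube the lattice-point count admits a genuine, non-asymptotic two-sided comparison with the ball volume for every $m\geq1$, so that $c\,m^{2/3}\leq\lambda_m/\lambda_1\leq C\,m^{2/3}$ holds for all $m$ and not merely in the limit. Apart from that, the only care needed is in tracking how the exponent $l$ enters $C_8$ through the factor $C^{l/2}\lambda_1^{l/2}$, which confirms the advertised fractional power $m^{l/3}$.
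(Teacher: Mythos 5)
Your proof is correct and takes essentially the same route as the paper: both deduce \eqref{c6} by substituting the three-dimensional Weyl-type eigenvalue growth $\lambda_m \asymp \lambda_1 m^{2/3}$ into the bound $l(\omega(t)) \leq C_7 \lambda_m^{l/2}$ of the preceding theorem. If anything your write-up is more careful, since the paper cites only the lower bound $\lambda_j \geq C_9 L^{-2} j^{2/3}$ (the wrong direction for an upper estimate) before appealing to $\lambda_m \sim \lambda_1 m^{2/3}$, whereas you explicitly establish the needed non-asymptotic upper bound $\lambda_m \leq C \lambda_1 m^{2/3}$ by lattice-point counting on the periodic cube.
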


\begin{proof}
Note that in the 3D case we have $\lambda_{j}\geq C_{9}L^{-2}j^{\frac{2}{3}}$
for some positive universal constant (see, for example \cite[Lemma VI 2.1]%
{21}). Therefore, Since $\lambda_{m}\backsim\lambda_{1}m^{\frac{2}{3}}$ the
growth in $m$ of the Hausdorff measure (%
\ref{c6}
) is less than $m^{\frac{l}{3}}$.
\end{proof}

If we impose the condition $\lambda_{m}\leq(\frac{1}{l_{\epsilon}})^{2}$ or
$\frac{\lambda_{m}}{\lambda_{1}}\leq(\frac{l_{0}}{l_{\epsilon}})^{2}$ where
$l_{0}=\lambda_{1}^{\frac{-1}{2}}$ represents characteristic macroscopic
length, and $l_{\epsilon}$ is the Kolmogorov length scale, i.e. $l_{\epsilon
}=\nolinebreak\frac{\nu^{3}}{\epsilon}$ where $\epsilon$ is\ Kolmogorov's mean
rate of dissipation of energy in turbulent flow (see e.g. \cite{0b, 6, 13a,
21}, and the references contained therein) is defined as
\[
\epsilon=\lambda_{1}^{\frac{3}{2}}\nu\lim\sup_{T\rightarrow\infty}%
\int\limits_{0}^{T}\parallel A^{\frac{l}{2}}\parallel_{2}^{2}ds.
\]
Substituting this in (%
\ref{c4}
) gives%
\begin{equation}
\sup_{t\rightarrow\infty}l\left(  \omega\left(  t\right)  \right)  \leq
C_{10}(\frac{l_{0}}{l_{\epsilon}})^{\frac{l}{3}}. \label{c8}%
\end{equation}
Since the (dimensionless) Grashoff number $G=\frac{\sup_{t\geq0}\left\Vert
f\right\Vert _{2}^{2}}{\nu^{3}\lambda_{1}^{\frac{3}{2}}}$ in 3D (see e.g.
\cite{0b, 6, 21}) is an upper bound for $(\frac{l_{0}}{l_{\epsilon}})^{2}$.
Hence, we obtain for the Hausdorff measure of the equation (%
\ref{1}
) the following estimate in terms of the Grashoff number $G$.

\begin{proposition}
Let $\left\Vert u\right\Vert \geq\beta\left\Vert f\right\Vert $ and $u_{0}%
\neq0$, and suppose that $\left\Vert u_{0}\right\Vert \leq C_{3}\nu\lambda
_{1}^{\frac{1}{2}}$. Then for any $\alpha\leq\dfrac{\nu\lambda_{1}^{\frac
{1}{2}}}{4}$ and $\beta\leq\dfrac{\nu\lambda_{1}}{2\sqrt{2}}$ we have%
\begin{equation}
\sup_{t\rightarrow\infty}l\left(  \omega\left(  t\right)  \right)  \leq
C_{11}G^{^{\frac{l}{6}}}\text{ for\ }t\geq t_{0}. \label{c9}%
\end{equation}

\end{proposition}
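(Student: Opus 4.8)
Looking at the final Proposition, it claims a bound on the long-time supremum of $l(\omega(t))$ in terms of the Grashof number $G$, specifically $C_{11}G^{l/6}$. The architecture leading here is already fully assembled in the preceding results, so my proof should be short and should simply chain them together with the dimensional substitution that has just been spelled out in the paragraph preceding the statement.

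$\textbf{Approach.}$ The plan is to start from Proposition 3.9, which gives $\sup_{t\to\infty} l(\omega(t)) \le C_{10}(l_0/l_\epsilon)^{l/3}$, and then invoke the stated fact that the dimensionless Grashof number $G = \frac{\sup_{t\ge 0}\|f\|_2^2}{\nu^3\lambda_1^{3/2}}$ is an upper bound for $(l_0/l_\epsilon)^2$. Combining these two facts is essentially a one-line computation: since $(l_0/l_\epsilon)^2 \le G$, raising both sides to the power $l/6$ yields $(l_0/l_\epsilon)^{l/3} \le G^{l/6}$, and substituting into the bound from Proposition 3.9 gives $\sup_{t\to\infty} l(\omega(t)) \le C_{10} G^{l/6}$, which is the claim with $C_{11}=C_{10}$.

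$\textbf{Steps in order.}$ First I would recall the conclusion of Proposition 3.9, valid under the hypotheses $\|u\|\ge\beta\|f\|$, $u_0\neq 0$, and $\|u_0\|\le C_3\nu\lambda_1^{1/2}$, together with the constraints $\alpha\le\frac{\nu\lambda_1^{1/2}}{4}$ on the Gevrey radius parameter. Next I would import the relation $(l_0/l_\epsilon)^2\le G$ that follows from the definitions of the Kolmogorov length scale $l_\epsilon$, the macroscopic length $l_0=\lambda_1^{-1/2}$, and the Grashof number, as asserted in the discussion preceding the statement. Then I would perform the monotone substitution: apply $x\mapsto x^{l/6}$ (increasing for $l>1$, $x>0$) to the inequality $(l_0/l_\epsilon)^2\le G$ and chain with Proposition 3.9. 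I would note that the slightly tightened hypothesis $\beta\le\frac{\nu\lambda_1}{2\sqrt 2}$ appearing in this Proposition is consistent with (indeed implies, for the relevant range of $\lambda_1$) the earlier condition $\beta\le\frac{4\sqrt2}{\nu}$ needed to invoke Theorem 3.4 through Proposition 3.9.

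$\textbf{Main obstacle.}$ The genuinely delicate point is not any of the estimates themselves but the bookkeeping of the hypothesis on $\beta$. The earlier results (Theorem 3.4, Proposition 3.9) require $\beta\le\frac{4\sqrt2}{\nu}$, whereas the present Proposition states $\beta\le\frac{\nu\lambda_1}{2\sqrt2}$; I would need to verify that the stated $\beta$ range is admissible for the chain of prior results, i.e. that the upstream hypotheses are met, so that the substitution argument is legitimately applicable. Beyond that, the only subtlety is ensuring the claimed inequality $(l_0/l_\epsilon)^2\le G$ is correctly oriented (that $G$ bounds the ratio from above, not below), since the exponent $l/6$ and the direction of the final inequality depend on it. Once these consistency checks are in place, the conclusion follows immediately by transitivity of the inequalities, and no further computation is required.
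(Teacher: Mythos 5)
Your proposal coincides with the paper's own (implicit) proof: the paper obtains (\ref{c9}) precisely by combining the estimate (\ref{c8}), $\sup_{t\rightarrow\infty}l\left(\omega\left(t\right)\right)\leq C_{10}(l_{0}/l_{\epsilon})^{\frac{l}{3}}$, with the assertion that $G$ bounds $(l_{0}/l_{\epsilon})^{2}$ from above, so that $(l_{0}/l_{\epsilon})^{\frac{l}{3}}=\left((l_{0}/l_{\epsilon})^{2}\right)^{\frac{l}{6}}\leq G^{\frac{l}{6}}$, exactly your one-line monotone substitution. Your explicit check of the modified hypothesis $\beta\leq\frac{\nu\lambda_{1}}{2\sqrt{2}}$ against the upstream requirement $\beta\leq\frac{4\sqrt{2}}{\nu}$ (which holds only when $\nu^{2}\lambda_{1}\leq16$) is if anything more careful than the paper, which passes over this mismatch in silence.
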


This result holds independently of $m$, with $C_{11}$ independent of $m$. The
estimate grows in $l$ at a rate lower than $\frac{l}{6}$. If we impose the
condition $l=1$, the estimates become $\sup_{t\rightarrow\infty}l\left(
\omega\left(  t\right)  \right)  \leq C_{11}G^{^{\frac{1}{6}}}$. This result
recover the usual 3D Navier--Stokes equations estimates, for the
two-dimensional Hausdorff measure $\mathcal{H}_{1}^{2}(N_{\omega}^{0})$
estimates of the level sets associated with the vorticity. Here again our
results indicate that under certain conditions the upper bounds for
$\mathcal{H}_{1}^{2}(N_{\omega}^{0})$ converge to the associated upper bounds
of the two-dimensional Hausdorff measure $\mathcal{H}_{1}^{2}(N_{\omega}^{0})$
estimates for the usual 3D Navier--Stokes equations with potential force.

\section{\textbf{Conclusion}}

Proving global regularity for the 3D Navier--Stokes equations is one of the
most challenging outstanding problems in nonlinear analysis. The main
difficulty in establishing this result lies in controlling certain norms of
vorticity. More specifically, the vorticity stretching term in the 3D
vorticity equation forms the main obstacle to achieving this control, C. Foias
\cite{5a} and estimates on the number of degrees of freedom for the
Navier-Stokes equations and its closure models are a measure of the complexity
of the system J. Avrin \cite{0b}. This paper proposed another interesting way
to estimate this complexity through bounding the size of the nodal set for the
vorticity and expressing this estimate in terms of $G$.

We provide upper bounds for the two-dimensional Hausdorff measure
$\mathcal{H}_{l}^{2}$ of the level sets associated with the vorticity of
modified three dimensional Navier-Stokes equations$\ $this bounds depend on
$m$ and $l$,$\ $this dependence is a fractional power of $l$. Thus answering a
question raised by J. Avrin \cite{0b}.

Another interesting way to study decaying turbulence in the three-dimensional
incompressible Navier--Stokes equations is to prvide a numerical investigation
of our theoretical results on the size of the nodal set for the vorticity in
the dependence of turbulence structure and vortex dynamics, as was done in
\cite{18c} for the number of numerically determining modes in the 2D
Navier--Stokes equations. It would be interesting to see how the turbulence
structure depend on $l$.

\end{document}